\documentclass[leqno]{article}

\usepackage[frenchb,english]{babel}
\usepackage[utf8]{inputenc}
\usepackage{amsmath}
\usepackage{amssymb}
\usepackage{amsfonts}
\usepackage{enumerate}
\usepackage{vmargin}
\usepackage[all]{xy}
\usepackage{mathrsfs}
\usepackage{mathtools}
\usepackage{lmodern}
\usepackage{slashed}
\usepackage[colorlinks=true,linkcolor=blue,pagebackref=true]{hyperref}%
\setmarginsrb{3cm}{3cm}{3.5cm}{3cm}{0cm}{0cm}{1.5cm}{3cm}
\usepackage{comment}
\usepackage{tikz}

\usepackage{accents}

\newlength{\dhatheight}

\newcommand{\D}{\mathrm{D}}

\newcommand{\I}{\mathrm{I}} 

\newcommand{\M}{\mathrm{M}}

\let\cal\relax
\newcommand{\cal}{\mathcal}

\newcommand{\R}{\ensuremath{\mathbb{R}}}

\newcommand{\Id}{\mathrm{Id}}

\newcommand{\la}{\langle}
\newcommand{\ra}{\rangle}

\renewcommand{\leq}{\ensuremath{\leqslant}}
\renewcommand{\geq}{\ensuremath{\geqslant}}
\newcommand{\qed}{\hfill \vrule height6pt  width6pt depth0pt}
\newcommand{\bnorm}[1]{ \big\| #1  \big\|}

\newcommand{\norm}[1]{\left\Vert#1\right\Vert}

\newcommand{\co}{\colon}

\newcommand{\ot}{\otimes}
\newcommand{\ovl}{\overline}

\newcommand{\cb}{\mathrm{cb}}

\let\i\relax 
\newcommand{\i}{\mathrm{i}}

\newcommand{\ov}{\overset}

\newcommand{\epsi}{\varepsilon}
\newcommand{\e}{\mathrm{e}} 

\DeclareMathOperator{\tr}{Tr} 
\DeclareMathOperator{\Tr}{Tr} 
\let\Re\relax 
\DeclareMathOperator{\Re}{Re} 

\newtheorem{thm}{Theorem}[section]

\newtheorem{prop}[thm]{Proposition}

\newtheorem{lemma}[thm]{Lemma}

\newtheorem{remark}[thm]{Remark}

\newenvironment{proof}[1][]{\noindent {\it Proof #1} : }{\hbox{~}\qed
\smallskip
}

\usepackage{tocloft}
\numberwithin{equation}{section}
\usepackage[nottoc,notlot,notlof]{tocbibind}

\let\OLDthebibliography\thebibliography
\renewcommand\thebibliography[1]{
  \OLDthebibliography{#1}
  \setlength{\parskip}{0pt}
  \setlength{\itemsep}{0pt plus 0.3ex}
}

\newcommand\reallywidehat[1]{\arraycolsep=0pt\relax%
\begin{array}{c}
\stretchto{
  \scaleto{
    \scalerel*[\widthof{\ensuremath{#1}}]{\kern-.5pt\bigwedge\kern-.5pt}
    {\rule[-\textheight/2]{1ex}{\textheight}} 
  }{\textheight} %
}{0.5ex}\\           
#1\\                 
\rule{-1ex}{0ex}
\end{array}
}

\begin{document}
\selectlanguage{english}

\title{\bfseries{Optimal square size for separating the operator and completely bounded norms of Schur multipliers on $S^4$}}
\date{}
\author{\bfseries{C\'edric Arhancet}}%
\maketitle
%


\begin{abstract}
We determine the smallest square size at which the operator norm and the completely bounded norm of a Schur multiplier on $S^4$ can differ. More precisely, for
$A=\begin{bmatrix}1&2&0\\1&-2&0\\2&2\mathrm{i}&0\end{bmatrix}$
we prove
\[
\|M_A\|_{S^4_3\to S^4_3}^2\leq\frac{1119}{250}
<\left(\frac{18983532}{947485}\right)^{1/2}
\leq \|\Id_{S^4_2}\ot M_A\|_{S^4_6\to S^4_6}^2.
\]
We also prove that no $2\times2$ Schur multiplier can have unequal operator and completely bounded norms on $S^4$, so square size three is optimal. In addition, we determine the exact ordinary norm of the previous multiplier analytically and give an independent computer-assisted certification of a rational upper bound.
\end{abstract}


\makeatletter
 \renewcommand{\@makefntext}[1]{#1}
 \makeatother
 \footnotetext{
 2020 {\it Mathematics subject classification:}
 Primary 47B10; Secondary 46L07, 47L25. 
\\
{\it Key words}: Schur multiplier, Schatten class, completely bounded norm, computer-assisted proof.}

{
  \hypersetup{linkcolor=blue}
 \tableofcontents
}

\section{Introduction}
\label{sec:Introduction}

For $1\leq p<\infty$, let $S^p_n$ denote $\M_n$ equipped with the Schatten $p$-norm $\|x\|_{S^p_n}=(\tr|x|^p)^{\frac{1}{p}}$. A scalar matrix $A=[a_{ij}] \in \M_n$ defines the Schur multiplier
\[
M_A \co S^p_n \to S^p_n,\qquad M_A(x)=[a_{ij}x_{ij}]_{i,j=1}^n.
\]
Its completely bounded norm is
\[
\norm{M_A}_{\cb,S^p_n \to S^p_n}
\ov{\mathrm{def}}{=} \sup_{m \geq 1} \norm{\Id_{S^p_m} \ot M_A}_{S^p_{mn}\to S^p_{mn}}.
\]
The equality between the ordinary and completely bounded norms of Schur multipliers is automatic for $p=1,2,\infty$. For $1<p<\infty$, $p\neq2$, Pisier asked whether there exists a bounded Schur multiplier on $S^p$ which is not completely bounded \cite[Conjecture 8.1.12]{Pis98} (see also \cite[Problem 9.8 p.~1503]{PiX03}). A preliminary finite-dimensional problem is whether one can have
\[
\norm{M_A}_{S^p_n\to S^p_n}
<
\norm{M_A}_{\cb,S^p_n \to S^p_n}
\]
for some integer $n$ and some symbol $A \in \M_n$. This question was emphasized by Lafforgue and de la Salle \cite[after Conjecture 1.8]{LafforgueDeLaSalle2011} and by Caspers and Wildschut \cite[Section 5]{CaspersWildschut2019}. We refer to \cite{AlP20} for the case $0< p <1$ with a different behavior and to \cite{Arh12} for the class of Fourier multipliers.

Every linear map between finite-dimensional operator spaces is completely bounded, so a finite-dimensional strict inequality does not resolve Pisier's conjecture. It isolates the genuinely isometric distinction between boundedness and complete boundedness that must precede such an example.

In parallel with, and independently of, the development of the present work, Huang, Sukochev and Tomskova very recently obtained the first example at $p=4$ in \cite{HuangSukochevTomskova2026}, using a different approach. Their symbol has a nonzero $3\times4$ corner. The purpose of the present paper is to determine the smallest square size at which this phenomenon can occur. We obtain a simpler $3\times3$ symbol and prove that size two is impossible. Our main result is the following theorem.

\begin{thm}
\label{thm-main}
Let $
A\ov{\mathrm{def}}{=} \begin{bmatrix}1&2&0\\1&-2&0\\2&2\mathrm{i}&0
\end{bmatrix}$. Then
\begin{equation}
\label{eq-main-gap}
\|M_A\|_{S^4_3\to S^4_3}
\leq\sqrt{\frac{1119}{250}}
<\left(\frac{18983532}{947485}\right)^{\frac{1}{4}}
\leq \|\Id_{S^4_2}\ot M_A\|_{S^4_6 \to S^4_6}.
\end{equation}
In particular, we have $\norm{M_A}_{S^4_3 \to S^4_3} < \norm{M_A}_{\cb,S^4_3\to S^4_3}$.
\end{thm}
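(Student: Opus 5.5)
The theorem splits into two independent estimates, and the final strict inequality between the rational numbers is elementary: square $1119/250$ and compare with $18983532/947485$. I would treat the lower and upper bounds separately.

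\emph{Lower bound.} The lower bound on $\|\Id_{S^4_2}\ot M_A\|$ is a witness computation. I will exhibit an explicit block matrix $X=[x_{ij}]_{i,j=1}^3$ with $x_{ij}\in \M_2$, having rational or Gaussian-rational entries. Here $(\Id\ot M_A)(X)=[a_{ij}x_{ij}]$. The point is that for $p=4$ we have $\|Y\|_{S^4}^4=\tr\big((Y^*Y)^2\big)$, which is a polynomial in the entries. So both $\|X\|_4^4$ and $\|(\Id\ot M_A)X\|_4^4$ are exact rational numbers, and it suffices to check
\[
\frac{\tr\big(((\Id\ot M_A)X)^*(\Id\ot M_A)X\big)^2}{\tr (X^*X)^2}\geq \frac{18983532}{947485}.
\]
The third column of $A$ vanishes, so I will take the third block column of $X$ to be zero and work with a $6\times 4$ matrix. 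To find $X$, I will numerically maximize the ratio over $\M_2$-valued $3\times 2$ block matrices and then round to nearby rationals. The bound $18983532/947485$ is presumably the exact value of this ratio at the rounded witness. Verifying it is a finite exact arithmetic check.

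\emph{Upper bound.} This is the main obstacle, since it is a supremum over all $x\in\M_3$. Again only the first two columns matter, so $x$ is effectively a $3\times 2$ matrix $[c_1\ c_2]$ and $M_A x=[D_1c_1\ D_2c_2]$ with $D_1=\mathrm{diag}(1,1,2)$ and $D_2=\mathrm{diag}(2,-2,2\i)$. The target is to show the polynomial inequality
\[
\tr\big((M_Ax)^*M_Ax\big)^2\leq \Big(\frac{1119}{250}\Big)^2\tr(x^*x)^2 .
\]
Write $G=x^*x$, a $2\times2$ Gram matrix with entries $\la c_i,c_j\ra$, and let $H$ be the corresponding Gram matrix of $M_Ax$. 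Then each side equals $\sum_{i,j}|G_{ij}|^2$ (resp.\ $|H_{ij}|^2$). This reduces to a quartic Hermitian form inequality in the six complex coordinates of $c_1,c_2$. My plan is to certify the nonnegativity of $\lambda^2\tr(G^2)-\tr(H^2)$, with $\lambda=1119/250$, by a sum-of-squares decomposition with rational coefficients. I would obtain it numerically by semidefinite programming and then round and verify it exactly, using the slack from $1119/250$ being slightly above the true norm. Before that, I would exploit symmetries to cut dimensions. Multiplying rows of $x$ by unimodular scalars, and columns as well, leaves everything invariant, so I may normalize phases. By homogeneity I may also normalize $\|c_1\|^2+\|c_2\|^2=1$.

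If the SOS route is unwieldy, the alternative is to parametrize by the singular value decomposition and reduce to a few real variables, then bound rigorously by interval arithmetic on a compact domain. The paper's mention of computing the exact ordinary norm analytically suggests the maximizer has structure, such as a rank-one or specific-phase configuration. That structure would let one localize the interval search near the maximizer, with crude bounds elsewhere.
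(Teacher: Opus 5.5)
Your lower-bound plan works the same way as the paper's: an explicit Gaussian-rational witness whose fourth powers $\tr((Y^*Y)^2)$ are computed exactly. The paper's witness is smaller than your $6\times 4$ ansatz. It is a $3\times 3$ matrix $Z$ supported on rows $1,3,5$ and columns $1,2,3$ of the flipped symbol $A\ot J_2$, and $\|F\circ Z\|_4^4/\|Z\|_4^4=18983532/947485$ exactly.

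The genuine gap is the scalar upper bound. This is the substance of the theorem, and you only describe a search for a certificate, with no argument that one exists or can be found. The margin is very small. The exact value is $\|M_A\|^4_{S^4_3\to S^4_3}=\Lambda_*\approx 20.0302$, whereas $(1119/250)^2\approx 20.0346$ and the level-two witness gives $\approx 20.0357$. Any certificate must therefore be sharp to a relative error of about $2\times 10^{-4}$.

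For the SOS route, $\lambda^2\tr(G^2)-\tr(H^2)$ is a quartic form in twelve real variables. By Hilbert's theorem, nonnegative quartics in that many variables need not be sums of squares of quadratics. You would need the degree-four SOS relaxation of the norm problem to lie within about $0.004$ of the true value, and to be strictly feasible enough to survive rational rounding. Nothing in the proposal addresses either point. Higher-degree multipliers exist in principle for strictly positive forms, but at an unknown and possibly prohibitive degree.

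The interval fallback is, after quotienting by homogeneity and the effective four-torus of row and column phases, a seven-dimensional branch-and-bound for a function with no convexity, at that margin. That is far from obviously tractable. You supply no additional structure, such as a local second-order analysis near the maximizing orbit, that would make it so.

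The missing idea is the paper's reduction to a convex two-parameter problem.
\begin{itemize}
\item Writing $x^*x=\sum_k x_k^*x_k$ and $(B\circ x)^*(B\circ x)=\sum_k D_k^*x_k^*x_kD_k$ over the rows $x_k$, with $D_k=\operatorname{diag}(b_{k1},b_{k2})$, turns the norm into a supremum over families of positive $2\times 2$ matrices $P_k$.
\item The rank-one constraint on the $P_k$ can be dropped, because extreme points of POVMs on $\mathbb{C}^2$ satisfy $\sum_k(\operatorname{rank}E_k)^2\le 4$.
\item Hilbert--Schmidt duality and conic duality then give $\|M_B\|^2=\sup\{f_B(Q):Q\ge 0,\ \|Q\|_{S^2_2}=1\}$, where $f_B(Q)$ is the value of a $2\times 2$ semidefinite program.
\item Since $f_B$ is convex, Loewner-monotone and invariant under diagonal unitaries, only a half-disk of parameters $(x,z)$ remains.
\end{itemize}
From there the paper has two routes. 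It either solves the SDP in closed form and obtains $\Lambda_*$ exactly, or it certifies $1119/250$ on $378$ dyadic boxes. In the certified route, convexity reduces each box to four exact $2\times 2$ vertex checks with no Lipschitz estimates. The non-affine $q_0=\sqrt{1/2-x^2-z^2}$ is handled by an affine tangent-plane majorant together with monotonicity. A reduction of this kind is what would turn your plan into a proof.
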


Numerically, the two certified bounds in \eqref{eq-main-gap} are
\[
2.1156559266\ldots
<2.1156857887\ldots.
\]

The scalar upper bound admits two independent proofs. We first give a fully analytic argument which, in fact, determines the exact ordinary norm of the multiplier. We then retain a computer-assisted proof of the slightly weaker rational estimate used in Theorem~\ref{thm-main}, both as an independent verification and because of its rather different convex-geometric nature. In the latter argument, floating-point computations are used solely to propose rational certificates, while every inequality used to certify a box is checked in exact rational arithmetic. The lower bound for the second amplification and the optimality of the square size are entirely analytic.


\paragraph{Approach of the paper.}
Our approach is based on separating the structural part of the problem from the particular choice of the symbol. In contrast with the scalar upper-bound argument of Huang, Sukochev and Tomskova \cite{HuangSukochevTomskova2026}, which exploits the special phase pattern of their symbol through estimates between row correlations and a subsequent low-dimensional eigenvalue calculation, we first derive a variational principle valid for an arbitrary two-column symbol. More precisely, the $S^4$-norm problem is transformed into an optimization problem over positive $2\times2$ matrices: convex geometry of finite positive operator-valued measures removes the rank-one constraints, Hilbert--Schmidt duality and conic duality then identify the norm with a minimization problem. For the particular symbol considered here, symmetry reduces this semidefinite problem to a one-parameter optimization, from which the exact scalar norm can be determined analytically. A similar structural principle is used to rule out the two-by-two case: a homogeneous scalar quartic inequality is lifted to arbitrary matrix amplifications by means of convex weights and H\"older's inequalities. Thus, rather than estimating individual matrices directly, the proofs reduce the distinction between ordinary and completely bounded norms to finite-dimensional convex geometry and elementary optimization.

\paragraph{Structure of the paper}
The paper is organized as follows. Section~\ref{sec-preliminaries} gives the elementary finite-corner reductions. Section~\ref{sec-M2} proves the optimality theorem.  Section~\ref{sec-variational} derives a two-dimensional variational formula for the scalar norm. Section~\ref{sec-analytic-scalar-norm} uses this formula to give a fully analytic computation of the exact scalar norm. Section~\ref{sec-certificate} provides an independent computer-assisted proof of the rational upper estimate appearing in Theorem~\ref{thm-main}, together with the explicit lower witness for the second amplification. Finally, Section~\ref{Reproducibility} explains the computational procedure and provides the information needed to reproduce the certified estimate.

\section{Finite-corner reductions}
\label{sec-preliminaries}

We use the unnormalized trace throughout. For a rectangular matrix $x\in \M_{m,n}$, the notation $\|x\|_p$ refers to the $\ell^p$-norm of its singular values. We write $S^p_{m,n}$ for $\M_{m,n}$ with this norm.

If a square symbol has zero rows or columns, its multiplier norm is determined by the corresponding rectangular corner. We shall use the following immediate observation.

\begin{lemma}
\label{lem-corner}
Let $B\in \M_{m,n}$ and let $A\in \M_N$ be obtained by placing $B$ in a corner and setting all remaining entries equal to zero, where $N\geq\max(m,n)$. Then
\[
\|M_A\|_{S^p_N\to S^p_N}
=\|M_B\|_{S^p_{m,n}\to S^p_{m,n}}.
\]
The analogous equality holds at every matrix level.
\end{lemma}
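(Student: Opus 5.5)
The plan is to realize $M_A$ as a compression of $M_B$ by coordinate projections and then bound the norms in both directions. Without loss of generality, place $B$ in the upper-left corner, so $a_{ij}=b_{ij}$ for $i\leq m$, $j\leq n$ and $a_{ij}=0$ otherwise; any other corner is reduced to this one by conjugating with permutation matrices, which are isometries of $S^p_N$ and commute with Schur multiplication up to permuting the symbol.

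Let $P\in \M_{m,N}$ be the partial isometry $P=[\I_m\ 0]$ and $Q\in\M_{N,n}$ be $Q=[\I_n\ 0]^{T}$. For $x\in\M_N$, the matrix $PxQ\in\M_{m,n}$ is its upper-left $m\times n$ corner. Two identities carry the argument:
\[
M_A(x)=P^*\,M_B(PxQ)\,Q^*,
\qquad
\|P^* y Q^*\|_{S^p_N}=\|y\|_{S^p_{m,n}} \quad (y\in\M_{m,n}).
\]
The first holds because $M_A$ kills every entry outside the corner and acts as $M_B$ on it. The second holds because $P^*yQ^*$ is $y$ padded with zeros, so it has the same nonzero singular values as $y$.

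For the upper bound, the ideal property of Schatten norms gives $\|PxQ\|_p\leq\|P\|_\infty\|x\|_p\|Q\|_\infty=\|x\|_p$. Hence
\[
\|M_A(x)\|_p=\|M_B(PxQ)\|_p\leq\|M_B\|\,\|x\|_p.
\]
For the lower bound, take $y\in\M_{m,n}$ and set $x=P^*yQ^*$. Then $\|x\|_p=\|y\|_p$ and $M_A(x)=P^*M_B(y)Q^*$, so $\|M_A(x)\|_p=\|M_B(y)\|_p$. Taking suprema gives the equality of operator norms.

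At matrix level $k$, the same argument applies with $\Id_{\M_k}\ot P$ and $\Id_{\M_k}\ot Q$, which are again partial isometries. One uses $\Id\ot M_A=(\Id\ot P^*)(\Id\ot M_B)\big((\Id\ot P)\,\cdot\,(\Id\ot Q)\big)(\Id\ot Q^*)$. This identity is checked blockwise, since $\Id\ot M_A$ is itself a Schur multiplier with symbol $\mathbf 1_k\ot A$. None of these steps is a real obstacle. The only point needing care is the convention that rectangular $S^p_{m,n}$ norms are the $\ell^p$-norms of singular values, so that zero-padding is isometric.
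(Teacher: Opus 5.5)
Your compression argument is correct and is the argument the paper has in mind. The paper gives no proof, calling the lemma an ``immediate observation'', and it later uses the same mechanism explicitly in Section~\ref{sec-certificate}: coordinate isometries $U,V$, the substitution $X=UZV^*$, and the fact that zero-padding is isometric on $S^p$, to pass from the witness $F\circ Z$ to $\Id_{S^4_2}\ot M_A$.

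One cosmetic point concerns corners other than the upper-left one. A single conjugation $x\mapsto \Pi x\Pi^*$ can fail there, for instance when the row and column index sets of the corner overlap. Use independent permutations $x\mapsto \Pi_1 x\Pi_2$ instead, or simply take $P$ and $Q$ to be the coordinate partial isometries onto the relevant row and column indices, which makes the reduction unnecessary.
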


For the symbol in Theorem~\ref{thm-main}, it is therefore enough to study
\begin{equation}
\label{eq-B}
B=\begin{bmatrix}
1&2\\1&-2\\2&2\mathrm{i}\end{bmatrix}.
\end{equation}
At matrix level two, the symbol is $J_2\otimes A$, where $J_2$ is the $2\times2$ all-ones matrix. After applying the tensor flip, this symbol becomes $A\otimes J_2$. Rectangular compressions of an amplification give lower bounds for its norm.

\section{No two-by-two example exists}
\label{sec-M2}

We begin with the optimality part of the problem. We show that the phenomenon exhibited in Theorem~\ref{thm-main} cannot occur for a two-by-two symbol: every Schur multiplier on $S^4_2$ has equal ordinary and completely bounded norms. The main ingredient is a lifting principle which converts a scalar homogeneous quartic inequality into the corresponding inequality for matrices of arbitrary size. This allows us to control all matrix amplifications using only the scalar $S^4_2$ estimate.

\begin{lemma}
\label{lem-lifting}
Suppose that $b_{ij},u_i,v_j>0$ and $K\geq0$ satisfy
\begin{equation}
\label{eq-scalar-polynomial}
\sum_{i,j=1}^2b_{ij}x_{ij}^4+2\sum_{i=1}^2u_ix_{i1}^2x_{i2}^2+2\sum_{j=1}^2v_jx_{1j}^2x_{2j}^2\geq Kx_{11}x_{12}x_{21}x_{22}
\end{equation}
for all $x_{ij}\geq0$. Let $X_{ij} \in \M_m$ and put
\begin{equation}
\label{def-qij-ri}
q_{ij}=\|X_{ij}\|_4^4,
\quad r_i \ov{\mathrm{def}}{=}\|X_{i1}^*X_{i2}\|_2^2,
\quad c_j \ov{\mathrm{def}}{=}\|X_{1j}X_{2j}^*\|_2^2,
\quad \tau\ov{\mathrm{def}}{=}\Tr(X_{12}^*X_{11}X_{21}^*X_{22}).
\end{equation}
Then
\begin{equation}
\label{eq-operator-polynomial}
\sum_{i,j=1}^2b_{ij}q_{ij}+2\sum_{i=1}^2u_ir_i+2\sum_{j=1}^2v_jc_j
\ov{\eqref{eq-operator-polynomial}}{\geq} K|\tau|.
\end{equation}
\end{lemma}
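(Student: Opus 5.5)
The plan is to reduce \eqref{eq-operator-polynomial} to \eqref{eq-scalar-polynomial} by constructing nonnegative scalars $x_{ij}$ that are dominated termwise by the matrix quantities and whose product is exactly $t\ov{\mathrm{def}}{=}|\tau|$. Concretely, we look for $x_{ij}\geq0$ with
\[
x_{ij}^4\leq q_{ij},\qquad x_{i1}^2x_{i2}^2\leq r_i,\qquad x_{1j}^2x_{2j}^2\leq c_j,\qquad x_{11}x_{12}x_{21}x_{22}=t .
\]
Since $b_{ij},u_i,v_j>0$, the left-hand side of \eqref{eq-operator-polynomial} then dominates the left-hand side of \eqref{eq-scalar-polynomial} at this $x$. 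Hence it is $\geq Kt$. If $t=0$ there is nothing to prove, because the left side is nonnegative. So we assume $t>0$, and then all the $q_{ij},r_i,c_j$ are positive by the bounds below.

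First we collect the basic inequalities for $\tau$ from Hilbert--Schmidt duality and H\"older's inequality. Writing $\tau=\Tr\bigl((X_{11}^*X_{12})^*(X_{21}^*X_{22})\bigr)$ gives $t^2\leq r_1r_2$. By cyclicity, $\tau=\Tr\bigl((X_{12}X_{22}^*)^*(X_{11}X_{21}^*)\bigr)$, which gives $t^2\leq c_1c_2$. H\"older with four $S^4$ factors gives $t^4\leq q_{11}q_{12}q_{21}q_{22}$. Finally, $r_i\leq (q_{i1}q_{i2})^{1/2}$ and $c_j\leq (q_{1j}q_{2j})^{1/2}$.

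Next we pass to logarithms, $a_{ij}=\log x_{ij}$, $Q_{ij}=\log q_{ij}$, $R_i=\log r_i$, $C_j=\log c_j$, $T=\log t$. The constraints become linear: $a_{ij}\leq Q_{ij}/4$, $a_{i1}+a_{i2}\leq R_i/2$, $a_{1j}+a_{2j}\leq C_j/2$. We need a point of this polyhedron with $\sum a_{ij}=T$. The $a_{ij}$ may be lowered freely, so it suffices that the maximum of $\sum a_{ij}$ over the polyhedron is at least $T$. By LP duality, this maximum equals the minimum of $\sum_{ij}\alpha_{ij}Q_{ij}/4+\sum_i\beta_iR_i/2+\sum_j\gamma_jC_j/2$ over weights $\alpha,\beta,\gamma\geq0$ covering each variable with total coefficient exactly one, i.e. $\alpha_{ij}+\beta_i+\gamma_j=1$. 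Note that $\beta_i+\gamma_j\leq1$ for all $i,j$.

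So it remains to prove, for all such weights,
\[
t\leq \prod_{ij}q_{ij}^{\alpha_{ij}/4}\prod_i r_i^{\beta_i/2}\prod_j c_j^{\gamma_j/2}.
\]
This is the step I expect to be the main obstacle. The dual feasible set is a polytope, and the right-hand side is log-linear in the weights. It therefore suffices to check the vertices. The pure vertices are $\alpha\equiv1$, $\beta\equiv1$ and $\gamma\equiv1$, and these are exactly the three bounds above. Mixed vertices such as $\beta_1=1$, $\alpha_{21}=\alpha_{22}=1$ follow from $t^2\leq r_1r_2$ combined with $r_2\leq(q_{21}q_{22})^{1/2}$.

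The genuinely mixed vertices, where both some $\beta_i$ and some $\gamma_j$ are positive (for instance $\beta_1=\gamma_1=1/2$), require an interpolated H\"older estimate. I would handle these by taking convex combinations of the previous bounds in logarithmic form, which works whenever the vertex is a convex combination of already-proved covers. If a vertex is not of that form, I would bound $\tau$ directly. Grouping the four factors as $X_{12}^*X_{11}\cdot X_{21}^*X_{22}$ or as $X_{22}X_{12}^*\cdot X_{11}X_{21}^*$ and applying H\"older with exponents adapted to the weights gives the required product of $S^2$-norms of row and column products and $S^4$-norms of single blocks. The first step would be to enumerate the vertices explicitly in the $2\times2$ case and verify each one in this way.
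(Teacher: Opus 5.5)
Your argument is correct once you carry out the vertex enumeration you postpone, and it follows a genuinely different route from the paper. Carrying out the enumeration also removes your worry about ``genuinely mixed'' vertices. Substituting $\alpha_{ij}=1-\beta_i-\gamma_j$, the dual feasible set becomes $\{(\beta,\gamma)\in\R_+^4:\max_i\beta_i+\max_j\gamma_j\leq1\}$. Write $(\beta,\gamma)=s\,(\beta/s,0)+s'\,(0,\gamma/s')+(1-s-s')\,(0,0)$ with $s=\max_i\beta_i$ and $s'=\max_j\gamma_j$, omitting a term when $s$ or $s'$ vanishes. This shows that the set is the convex hull of $0$, the four unit vectors, $(1,1,0,0)$ and $(0,0,1,1)$. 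Hence no vertex has a positive $\beta_i$ and a positive $\gamma_j$ at the same time. Your example $\beta_1=\gamma_1=\frac12$ is the midpoint of the vertices $\beta_1=1$ and $\gamma_1=1$. The seven vertices are exactly your three pure covers and the four covers of the form $\beta_i=1$ (resp.\ $\gamma_j=1$) with $\alpha\equiv1$ on the other row (resp.\ column). You have already verified all of these, so the proof closes without any ``interpolated H\"older'' estimate. That fallback could not be invoked as phrased anyway, since a vertex is never a nontrivial convex combination of other feasible points.

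Your dual polytope is in fact the paper's $\mathcal W$ after rescaling the $q$-weights by $4$ and the $r$- and $c$-weights by $2$; the normalization $\sum_kw_k=1$ is then automatic. The paper checks exactly these seven vertices. What differs is how the scalar inequality is transferred.

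The paper works on the scalar side. It minimizes $F$ on the hyperplane $H$ (coercivity uses $b_{ij}>0$). From the first-order conditions it extracts weights $w\in\mathcal W$ with $\prod_k(\beta_k/w_k)^{w_k}\geq K$, which is an AM--GM certificate for the scalar quartic. It then applies the weighted AM--GM inequality to the matrix quantities.

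You work on the matrix side. LP duality in logarithmic coordinates produces a scalar point whose monomials are termwise dominated by $q_{ij},r_i,c_j$ and whose product $x_{11}x_{12}x_{21}x_{22}$ equals $|\tau|$. You then simply evaluate the hypothesis at that point.

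Your route is shorter and avoids both the minimization and AM--GM. It needs only $b_{ij},u_i,v_j\geq0$, and it yields the stronger structural fact that every block configuration is dominated by a scalar one. The paper's route has the feature that its weights depend only on the coefficients of the scalar quartic, not on the matrices $X_{ij}$. It also proves the required duality by hand instead of quoting LP duality.
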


\begin{proof}
Set 
\begin{equation}
\label{def-alpha-23}
(x_1,x_2,x_3,x_4)
\ov{\mathrm{def}}{=}
(x_{11},x_{12},x_{21},x_{22})
\quad \text{and} \quad \alpha \ov{\mathrm{def}}{=}(1,1,1,1).
\end{equation}
Recall that the exponent vector of a monomial $x_1^{a_1}x_2^{a_2}x_3^{a_3}x_4^{a_4}$ is the vector $(a_1,a_2,a_3,a_4)$. Thus, if $\nu_k$ denotes the exponent vector of the $k$th monomial in the left-hand side of \eqref{eq-scalar-polynomial} and if $(e_1,e_2,e_3,e_4)$ is the canonical basis of $\R^4$ then
\begin{align}
\label{def-nu-k}
&\nu_1=4e_1,\qquad \nu_2=4e_2,\qquad \nu_3=4e_3,\qquad \nu_4=4e_4,\\
&\nu_5=2e_1+2e_2,\qquad \nu_6=2e_3+2e_4,\qquad
\nu_7=2e_1+2e_3,\qquad \nu_8=2e_2+2e_4. \nonumber
\end{align}
We also set
\[
(\beta_k)_{k=1}^8
\ov{\mathrm{def}}{=}
(b_{11},b_{12},b_{21},b_{22},2u_1,2u_2,2v_1,2v_2).
\]
Now, we construct suitable weights $w_1,\ldots,w_8$. Suppose first that $x_j>0$ for any $1\leq j\leq4$ and write $x_j=\exp(y_j)$. Dividing \eqref{eq-scalar-polynomial} by 
$$
\exp(\la \alpha,y\ra)
\ov{\eqref{def-alpha-23}}{=}\exp(y_1)\exp(y_2)\exp(y_3)\exp(y_4)=x_1x_2x_3x_4 \ov{\eqref{def-alpha-23}}{=} x_{11}x_{12}x_{21}x_{22}
$$ 
gives
\begin{equation}
\label{def-de-grand-F}
F(y)
\ov{\mathrm{def}}{=}
\sum_{k=1}^8\beta_k\exp\big(\la\nu_k-\alpha,y\ra\big)
\geq K,
\qquad y=(y_1,y_2,y_3,y_4) \in \R^4.
\end{equation}

Since every monomial under consideration is homogeneous of degree four, the sum of the coordinates of each exponent vector $\nu_k$ is equal to $4$. On the other hand, the sum of the coordinates of $\alpha=(1,1,1,1)$ is also $4$. Hence, for any $1\leq k\leq8$,
\begin{equation}
\label{inter-ffj0}
\la \nu_k-\alpha,(1,1,1,1)\ra
=\sum_{j=1}^4(\nu_k)_j-\sum_{j=1}^4\alpha_j
=4-4
=0.
\end{equation}
Consequently, for any $y \in \R^4$ and any $t \in \R$, we have
\[
F(y+t(1,1,1,1))
\ov{\eqref{def-de-grand-F}}{=} \sum_{k=1}^8\beta_k\exp\big(\la\nu_k-\alpha,y+t(1,1,1,1)\ra\big)
\ov{\eqref{inter-ffj0}}{=} \sum_{k=1}^8\beta_k\exp\big(\la\nu_k-\alpha,y\ra\big)
\ov{\eqref{def-de-grand-F}}{=} F(y).
\]
It is therefore sufficient to minimize $F$ on the hyperplane
\[
H 
\ov{\mathrm{def}}{=} \left\{y \in \R^4 : y_1+y_2+y_3+y_4=0 \right\}.
\]
The restriction of $F$ to $H$ is coercive. Indeed, the first four summands of $F$ on $H$ are
\[
b_{11}\e^{4y_1},\qquad b_{12}\e^{4y_2},\qquad
b_{21}\e^{4y_3},\qquad b_{22}\e^{4y_4}.
\]
Moreover, if $y \in H$ and $\norm{y} \to \infty$, then $
\max_{1\leq j \leq4}y_j\to\infty$. Indeed, if one coordinate tends to $-\infty$, the condition $
y_1+y_2+y_3+y_4=0$ forces at least one of the other three coordinates to tend to $+\infty$, otherwise all four coordinates would remain bounded. One of the first four summands of $F(y)$ is therefore equal to $b\exp\left(4\max_{1\leq j\leq4}y_j\right)$ for some $b\in\{b_{11},b_{12},b_{21},b_{22}\}$. Since all the summands of $F$ are positive, we obtain 
\[
F(y)
\geq
\min_{1\leq i,j\leq2}b_{ij}
\exp\left(4\max_{1\leq j\leq4}y_j\right) \to \infty.
\]
It follows that the restriction of $F$ to $H$ attains its minimum at some point $y^0\in H$. Set
\begin{equation}
\label{def-y0}
S\ov{\mathrm{def}}{=}F(y^0)
\quad\text{and}\quad
w_k\ov{\mathrm{def}}{=}
\frac{\beta_k\exp\big(\langle\nu_k-\alpha,y^0\rangle\big)}{S},
\qquad 1\leq k\leq8.
\end{equation}
Thus $w_k$ is the proportion of the $k$th summand in the sum $F(y^0) \ov{\eqref{def-de-grand-F}}{=}\sum_{k=1}^8\beta_k\exp\big(\la\nu_k-\alpha,y^0\ra\big)$. In particular, we have
\begin{equation}
\label{numero-4}
w_k>0
\quad\text{and}\quad
\sum_{k=1}^8w_k=1.
\end{equation}
Since $y^0$ minimizes the restriction of $F$ to $H$, we have $\D F(y^0).h=0$ for any $h\in H$. Thus, for any $h \in H$, we have
\[
0
\ov{\eqref{def-de-grand-F}}{=} \sum_{k=1}^8 \beta_k
\exp\big(\la\nu_k-\alpha,y^0\ra\big)
\langle\nu_k-\alpha,h\rangle
=S\left\la \sum_{k=1}^8w_k(\nu_k-\alpha),h \right\ra
\]
Thus the vector $
\sum_{k=1}^8w_k(\nu_k-\alpha)$ is orthogonal to $H$. On the other hand, each vector $\nu_k-\alpha$ belongs to $H$, because the sum of its coordinates is zero. 
Since $H$ is a vector space, it follows that $
\sum_{k=1}^8w_k(\nu_k-\alpha)\in H$. This vector belongs both to $H$ and to $H^\perp$, and therefore it must be zero. Consequently, we have $\sum_{k=1}^8w_k(\nu_k-\alpha)=0$ or equivalently
\begin{equation}
\label{eq-weighted-exponent}
\sum_{k=1}^8 w_k\nu_k
= \alpha
\ov{\eqref{def-alpha-23}}{=} (1,1,1,1).
\end{equation}
In other words, the weighted exponent vector of the eight monomials is the exponent vector of $x_1x_2x_3x_4$. Finally, the definition of $w_k$ gives
\begin{equation}
\label{inter-12345R}
\frac{\beta_k}{w_k}
\ov{\eqref{def-y0}}{=} S\exp\big(-\la\nu_k-\alpha,y^0 \ra\big).
\end{equation}
Using $\sum_{k=1}^8 w_k \ov{\eqref{numero-4}}{=} 1$ and \eqref{eq-weighted-exponent}, we obtain
\begin{align*}
\MoveEqLeft
\prod_{k=1}^8 \left(\frac{\beta_k}{w_k}\right)^{w_k}
\ov{\eqref{inter-12345R}}{=}\prod_{k=1}^8\left(S\exp\big(-\langle\nu_k-\alpha,y^0\rangle\big)\right)^{w_k} 
=S^{\sum_{k=1}^8w_k}
\exp\left(-\left\la
\sum_{k=1}^8w_k(\nu_k-\alpha),y^0
\right\rangle\right) \\
&\ov{\eqref{numero-4}}{=} S\exp\left(-\left\la \sum_{k=1}^8 w_k\nu_k-\sum_{k=1}^8w_k\alpha,y^0 \right\ra\right)
\ov{\eqref{numero-4} \eqref{eq-weighted-exponent}}{=} S.
\end{align*}
Since $F(y) \ov{\eqref{def-de-grand-F}}{\geq} K$ for any $y \in \R^4$, we have $S \ov{\eqref{def-y0}}{=} F(y^0) \geq K$. Hence
\begin{equation}
\label{eq-coefficient-product}
\prod_{k=1}^8 \left(\frac{\beta_k}{w_k}\right)^{w_k}
\geq K.
\end{equation}
Consider the polytope of admissible weights
\[
\mathcal{W}
\ov{\mathrm{def}}{=}
\left\{
w\in\R_+^8:
\sum_{k=1}^8w_k=1
\text{ and }
\sum_{k=1}^8w_k\nu_k=\alpha
\right\}.
\]
Now, we determine its extreme points. Since $\alpha \ov{\eqref{def-alpha-23}}{=} (1,1,1,1)$, the equality $
\sum_{k=1}^8w_k\nu_k=\alpha$ is equivalent to
\begin{align}
\label{w-infty-inter}
4w_1+2w_5+2w_7& \ov{\eqref{def-nu-k}}{=} 1, \quad 4w_2+2w_5+2w_8 \ov{\eqref{def-nu-k}}{=}1,\\
4w_3+2w_6+2w_7&\ov{\eqref{def-nu-k}}{=} 1, \quad 4w_4+2w_6+2w_8 \ov{\eqref{def-nu-k}}{=}1.\label{w-infty-inter-bis}
\end{align}
Set $ a\ov{\mathrm{def}}{=}2w_5$, $b\ov{\mathrm{def}}{=}2w_6$, $c\ov{\mathrm{def}}{=}2w_7$, $d\ov{\mathrm{def}}{=}2w_8$. Then
\[
w_1 \ov{\eqref{w-infty-inter}}{=} \frac{1-a-c}{4},\qquad
w_2 \ov{\eqref{w-infty-inter}}{=} \frac{1-a-d}{4},\qquad
w_3 \ov{\eqref{w-infty-inter-bis}}{=} \frac{1-b-c}{4}\quad \text{and}\quad
w_4 \ov{\eqref{w-infty-inter-bis}}{=} \frac{1-b-d}{4}.
\]
Consequently, the nonnegativity of the eight weights is equivalent to
\[
a,b,c,d\geq0,
\qquad
a+c\leq1,\quad
a+d\leq1,\quad
b+c\leq1,\quad
b+d\leq1.
\]
Let $s\ov{\mathrm{def}}{=}\max\{a,b\}$ and $t\ov{\mathrm{def}}{=}\max\{c,d\}$. The four preceding inequalities imply $s+t\leq1$. If $s > 0$, put
\[
(u_1,u_2)
\ov{\mathrm{def}}{=}\left(\frac{a}{s},\frac{b}{s}\right)\in[0,1]^2,
\]
and take $(u_1,u_2)\ov{\mathrm{def}}{=}(0,0)$ if $s=0$. Define $(v_1,v_2)$ similarly from $(c,d)$ and $t$. Then, we have
\[
(a,b,c,d)
=s(u_1,u_2,0,0)
+t(0,0,v_1,v_2)
+(1-s-t)(0,0,0,0).
\]
Since $[0,1]^2$ is the convex hull of $
(0,0),(1,0),(0,1),(1,1)$, the admissible quadruples $(a,b,c,d)$ form the convex hull of
\[
0, e_1, e_2, e_3, e_4, e_1+e_2, e_3+e_4.
\]
Translating these seven points back into the variables $w_1,\ldots,w_8$ shows that the set $\mathcal{W}$ is the convex hull of the following seven points:
\[
\begin{split}
&(\tfrac14,\tfrac14,\tfrac14,\tfrac14,0,0,0,0),
\quad(\tfrac14,\tfrac14,0,0,0,\tfrac12,0,0),
\quad(\tfrac14,0,\tfrac14,0,0,0,0,\tfrac12),\\
&(0,\tfrac14,0,\tfrac14,0,0,\tfrac12,0),
\quad(0,0,0,0,0,0,\tfrac12,\tfrac12),
\quad(0,0,\tfrac14,\tfrac14,\tfrac12,0,0,0),\\
&(0,0,0,0,\tfrac12,\tfrac12,0,0).
\end{split}
\]
Let $m \ov{\mathrm{def}}{=} (q_{11},q_{12},q_{21},q_{22},r_1,r_2,c_1,c_2)$. By H\"older's inequality and \eqref{def-qij-ri}, we have
\begin{equation}
\label{eq-tau-basic}
|\tau|\leq(q_{11}q_{12}q_{21}q_{22})^{\frac{1}{4}},
\quad |\tau|\leq(r_1r_2)^{\frac{1}{2}},
\quad |\tau|\leq(c_1c_2)^{\frac{1}{2}},
\end{equation}
and
\begin{equation}
\label{eq-edge-bounds}
r_1\leq(q_{11}q_{12})^{\frac{1}{2}},\quad r_2\leq(q_{21}q_{22})^{\frac{1}{2}},\quad c_1\leq(q_{11}q_{21})^{\frac{1}{2}},\quad c_2\leq(q_{12}q_{22})^{\frac{1}{2}}.
\end{equation}
If $\tau=0$, inequality \eqref{eq-operator-polynomial} is immediate, since every term on its left-hand side is nonnegative. We may therefore suppose that $\tau\neq0$. It follows from \eqref{eq-tau-basic} that all the entries of
\[
m=(q_{11},q_{12},q_{21},q_{22},r_1,r_2,c_1,c_2)
\]
are strictly positive. Denote the seven extreme points of $\mathcal{W}$ displayed previously by $w^{(1)},\ldots,w^{(7)}$. Recall that
\[
m=(m_1,\ldots,m_8)
=(q_{11},q_{12},q_{21},q_{22},r_1,r_2,c_1,c_2).
\]
For each extreme point $w^{(s)}$, consider the weighted product $
\prod_{k=1}^8m_k^{w_k^{(s)}}$. For instance, for $
w^{(1)}
=
\left(\frac14,\frac14,\frac14,\frac14,0,0,0,0\right)$, we obtain $
\prod_{k=1}^8m_k^{w_k^{(1)}}
=
(q_{11}q_{12}q_{21}q_{22})^{\frac14}$, 
whereas for $
w^{(2)}
=
\left(\frac14,\frac14,0,0,0,\frac12,0,0\right)$, we obtain $
\prod_{k=1}^8m_k^{w_k^{(2)}}
=
(q_{11}q_{12})^{\frac14}r_2^{\frac12}$. Applying the same substitution to the remaining five extreme points gives the seven quantities
\[
\begin{gathered}
(q_{11}q_{12}q_{21}q_{22})^{\frac14},\qquad
(q_{11}q_{12})^{\frac14}r_2^{\frac12},\qquad
(q_{11}q_{21})^{\frac14}c_2^{\frac12},\\
(q_{12}q_{22})^{\frac14}c_1^{\frac12},\qquad
(c_1c_2)^{\frac12},\qquad
(q_{21}q_{22})^{\frac14}r_1^{\frac12},\qquad
(r_1r_2)^{\frac12}.
\end{gathered}
\]

The first, fifth and seventh quantities dominate $|\tau|$ directly by
\eqref{eq-tau-basic}. For the second one, we have
\[
|\tau|
\leq(r_1r_2)^{\frac12}
\ov{\eqref{eq-edge-bounds}}{\leq} \left((q_{11}q_{12})^{\frac12}r_2\right)^{\frac12}
=(q_{11}q_{12})^{\frac14}r_2^{\frac12}.
\]
The sixth inequality follows in the same way using the estimate for $r_2$ in
\eqref{eq-edge-bounds}. Similarly,
\[
|\tau|
\ov{\eqref{eq-tau-basic}}{\leq} (c_1c_2)^{\frac12}
\ov{\eqref{eq-edge-bounds}}{\leq} (q_{11}q_{21})^{\frac14}c_2^{\frac12},
\]
and the fourth inequality follows using the estimate for $c_2$. 
Let $w \in \cal{W}$. Since $\mathcal{W}$ is the convex hull of these seven points, there exist numbers $\theta_1,\ldots,\theta_7\geq0$ satisfying
\begin{equation}
\label{inter-ytu-77}
\sum_{s=1}^7\theta_s=1
\quad\text{and}\quad
w=\sum_{s=1}^7\theta_sw^{(s)}.
\end{equation}
For each extreme point $w^{(s)}$, the preceding estimates show that
\begin{equation}
\label{ine-inter-3456}
\prod_{k=1}^8m_k^{w_k^{(s)}}
\geq|\tau|.
\end{equation}
Therefore
\begin{align}
\MoveEqLeft
\label{inter-3465}
\prod_{k=1}^8m_k^{w_k}
\ov{\eqref{inter-ytu-77}}{=} \prod_{k=1}^8m_k^{\sum_{s=1}^7\theta_sw_k^{(s)}}
= \prod_{s=1}^7\left(\prod_{k=1}^8m_k^{w_k^{(s)}}\right)^{\theta_s}
\ov{\eqref{ine-inter-3456}}{\geq} \prod_{s=1}^7|\tau|^{\theta_s}
=|\tau|^{\sum_{s=1}^7 \theta_s}
\ov{\eqref{inter-ytu-77}}{=}|\tau|.         
\end{align}
Recall that the weights $w_1,\ldots,w_8$ constructed previously are strictly positive (see \eqref{numero-4}). The weighted arithmetic-geometric mean inequality\footnote{\thefootnote. For positive real numbers $x_1, x_2, \dots, x_n$ and non-negative weights $w_1, w_2, \dots, w_n$ such that $\sum_{i=1}^{n} w_i = 1$, the weighted arithmetic-geometric mean inequality states that:
\begin{equation}
\sum_{i=1}^{n} w_i x_i 
\geq \prod_{i=1}^{n} x_i^{w_i}.
\end{equation}
}, \eqref{eq-coefficient-product} and the preceding estimate give
\begin{align*}
\MoveEqLeft
\sum_{k=1}^8\beta_km_k
=\sum_{k=1}^8w_k\frac{\beta_km_k}{w_k}
\geq\prod_{k=1}^8
\left(\frac{\beta_km_k}{w_k}\right)^{w_k}
=
\left(
\prod_{k=1}^8
\left(\frac{\beta_k}{w_k}\right)^{w_k}
\right)
\left(
\prod_{k=1}^8m_k^{w_k}
\right)
\ov{\eqref{eq-coefficient-product}\eqref{inter-3465}}{\geq} K|\tau|.         
\end{align*}
This proves \eqref{eq-operator-polynomial}.
\end{proof}

\begin{thm}
\label{thm-M2}
For any matrix $C \in \M_2$, the Schur multiplier $M_C \co S^4_2 \to S^4_2$ satisfies
\[
\norm{M_C}_{\cb,S^4_2\to S^4_2}
=\norm{M_C}_{S^4_2 \to S^4_2}.
\]
Consequently, the symbol in Theorem~\ref{thm-main} has the smallest possible square size.
\end{thm}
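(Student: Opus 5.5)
We will use Lemma~\ref{lem-lifting} to pass from the scalar estimate to every amplification. Write $C=[c_{ij}]$ and $N\ov{\mathrm{def}}{=}\norm{M_C}_{S^4_2\to S^4_2}$. Since $\norm{\Id_{S^4_m}\ot M_C}\geq N$ trivially, we only need the reverse inequality at each level $m$. An element of $S^4_m(S^4_2)=S^4_{2m}$ is a block matrix $X=[X_{ij}]_{i,j=1}^2$ with $X_{ij}\in\M_m$, and $(\Id\ot M_C)(X)=[c_{ij}X_{ij}]$.

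\textbf{Step 1: block expansion of $\|X\|_4^4$.} Expand $\|X\|_4^4=\Tr\big((X^*X)^2\big)$ blockwise. Grouping terms gives
\[
\|X\|_4^4=\sum_{i,j}q_{ij}+2\sum_i r_i+2\sum_j c_j+4\Re\,\tau,
\]
with $q_{ij},r_i,c_j,\tau$ as in \eqref{def-qij-ri}. Indeed, each block of $X^*X$ is $\sum_i X_{ik}^*X_{il}$, and squaring produces three kinds of terms: the diagonal quartic terms $\|X_{ij}\|_4^4$, the cross terms $\|X_{i1}^*X_{i2}\|_2^2$ and $\|X_{1j}X_{2j}^*\|_2^2$, each appearing twice, and four cyclic terms that combine to $4\Re\Tr(X_{12}^*X_{11}X_{21}^*X_{22})$. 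I will verify the coefficient $4$ and the exact cyclic word carefully.

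In the scalar case $m=1$ the same formula holds with $q_{ij}=|x_{ij}|^4$, $r_i=|x_{i1}|^2|x_{i2}|^2$, $c_j=|x_{1j}|^2|x_{2j}|^2$ and $\tau=\ovl{x_{12}}x_{11}\ovl{x_{21}}x_{22}$.

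\textbf{Step 2: scalar quartic inequality.} The inequality $N^4\|x\|_4^4\geq\|M_C x\|_4^4$ for all $x$ becomes
\[
\sum(N^4-|c_{ij}|^4)q_{ij}+2\sum(N^4-|c_{i1}c_{i2}|^2)r_i+2\sum(N^4-|c_{1j}c_{2j}|^2)c_j\ \geq\ 4\Re\big((\kappa-N^4)\tau\big),
\]
where $\kappa=\ovl{c_{12}}c_{11}\ovl{c_{21}}c_{22}$. All coefficients on the left are $\geq0$, because $N\geq\max|c_{ij}|$.

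Now optimize the phases of the $x_{ij}$ with fixed moduli. The phase of $\tau$ is free, so the scalar statement is equivalent to \eqref{eq-scalar-polynomial} with
\[
b_{ij}=N^4-|c_{ij}|^4,\qquad u_i=N^4-|c_{i1}c_{i2}|^2,\qquad v_j=N^4-|c_{1j}c_{2j}|^2,\qquad K=4|\kappa-N^4|,
\]
for all $x_{ij}\geq0$.

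\textbf{Step 3: apply Lemma~\ref{lem-lifting}.} The lemma then yields the operator inequality for every $m$, with $K|\tau|\geq 4\Re((\kappa-N^4)\tau)$. Combined with the block expansion this gives $N^4\|X\|_4^4\geq\|(\Id\ot M_C)X\|_4^4$, hence $\norm{M_C}_{\cb}\leq N$.

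\textbf{Main obstacle: degenerate coefficients.} The lemma requires $b_{ij},u_i,v_j>0$, but some of these may vanish. For example, if $|c_{11}|=N$ then $b_{11}=0$. I would handle this by perturbation: replace $N$ by $N+\epsi$. All coefficients then become strictly positive, and the scalar inequality with $N+\epsi$ still holds, since increasing $N$ adds
\[
\big((N+\epsi)^4-N^4\big)\|x\|_4^4\ \geq\ 0
\]
to the left side while the right side changes only by a phase-optimized amount. One must check this increment dominates the change of $K$; the cleanest way is to note that the full expression equals $(N+\epsi)^4\|x\|_4^4-\|M_Cx\|_4^4$, which is nonnegative for every complex $x$. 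Then let $\epsi\to0$.

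The main care points are the block expansion coefficients and the reduction from complex to nonnegative $x$ through phases. For the phases: since $\kappa-N^4$ is fixed, choosing the phases of the $x_{ij}$ so that $(\kappa-N^4)\tau$ is real and positive makes the scalar inequality exactly \eqref{eq-scalar-polynomial}.

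Finally, the optimality claim follows by combining this result with Theorem~\ref{thm-main} and Lemma~\ref{lem-corner}: any $1\times1$ or $2\times2$ symbol embeds as a $2\times2$ corner, so no such symbol can separate the two norms.
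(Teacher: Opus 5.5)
Your proposal is correct and is essentially the paper's proof. It uses the same four steps: the block expansion \eqref{eq-block-expansion}; a phase choice that turns the scalar bound into the nonnegative quartic inequality \eqref{eq-scalar-polynomial} with $K=4|\kappa-\mu|$; Lemma~\ref{lem-lifting}; and finally $K|\tau|\geq 4\Re((\kappa-\mu)\tau)$. Your replacement of $N^4$ by $(N+\varepsilon)^4$ to make all coefficients strictly positive is exactly the paper's device of fixing $\mu>\lambda=\norm{M_C}_{S^4_2\to S^4_2}^4$ from the start and letting $\mu\downarrow\lambda$ at the end.
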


\begin{proof}
Write $C=[c_{ij}]_{i,j=1}^2$ and set $\lambda \ov{\mathrm{def}}{=} \norm{M_C}_{S^4_2 \to S^4_2}^4$. Fix $\mu>\lambda$, an integer $m \geq 1$, and a block matrix $X=[X_{ij}]_{i,j=1}^2$ in $\M_2(\M_m)$. Set
\begin{equation}
\label{def-de-P-Q-R}
P \ov{\mathrm{def}}{=} X_{11}^*X_{11}+X_{21}^*X_{21},\qquad
Q \ov{\mathrm{def}}{=} X_{12}^*X_{12}+X_{22}^*X_{22},\qquad
R \ov{\mathrm{def}}{=} X_{11}^*X_{12}+X_{21}^*X_{22}.
\end{equation}
Then
\begin{equation}
\label{inter-28}
X^*X
=\begin{bmatrix}
  X_{11}   & X_{12}  \\
  X_{21}   &  X_{22} \\
\end{bmatrix}^*
\begin{bmatrix}
  X_{11}    & X_{12}  \\
   X_{21}  &  X_{22} \\
\end{bmatrix}
=\begin{bmatrix}
  X_{11}^*   & X_{21}^*  \\
  X_{12}^*   &  X_{22}^* \\
\end{bmatrix}
\begin{bmatrix}
  X_{11}  & X_{12}  \\
  X_{21}  &  X_{22} \\
\end{bmatrix}
\ov{\eqref{def-de-P-Q-R}}{=}\begin{bmatrix}
P&R\\
R^*&Q
\end{bmatrix},
\end{equation}
Hence $(X^*X)^2 \ov{\eqref{inter-28}}{=}\begin{bmatrix}
  P^2+RR^*   & PR+RQ  \\
 R^*P+QR^*    &  R^*R+Q^2 \\
\end{bmatrix}$. Consequently, we have
\begin{equation}
\label{inter-34567}
\tr|X|^4
=\tr((X^*X)^2)
= \tr(P^2)+\tr(Q^2)+2\tr(R^*R).
\end{equation}
By expanding the two first terms and using the cyclicity of the trace, we obtain
\begin{align}
\MoveEqLeft
\label{inter-ABC-1}         
\tr(P^2)
\ov{\eqref{def-de-P-Q-R}}{=} \tr\big[(X_{11}^*X_{11}+X_{21}^*X_{21})(X_{11}^*X_{11}+X_{21}^*X_{21})\big] \\
&=\tr\big[X_{11}^*X_{11}X_{11}^*X_{11}+X_{11}^*X_{11}X_{21}^*X_{21} +X_{21}^*X_{21}X_{11}^*X_{11} +X_{21}^*X_{21}X_{21}^*X_{21} \big] \nonumber\\
&=q_{11}+q_{21}+2c_1, \nonumber
\end{align}
where $q_{11} \ov{\mathrm{def}}{=} \tr(X_{11}^*X_{11}X_{11}^*X_{11})$, $q_{21} \ov{\mathrm{def}}{=} \tr(X_{21}^*X_{21}X_{21}^*X_{21})$, $c_1 \ov{\mathrm{def}}{=} \tr(X_{11}^*X_{11}X_{21}^*X_{21})$ and
\begin{align}
\MoveEqLeft
\label{inter-ABC-2}         
\tr(Q^2)
\ov{\eqref{def-de-P-Q-R}}{=} \tr\big[(X_{12}^*X_{12}+X_{22}^*X_{22})(X_{12}^*X_{12}+X_{22}^*X_{22})\big]\\
&=\tr\big[X_{12}^*X_{12}X_{12}^*X_{12} +X_{12}^*X_{12}X_{22}^*X_{22} +X_{22}^*X_{22}X_{12}^*X_{12} +X_{22}^*X_{22}X_{22}^*X_{22}\big] \nonumber\\
&=q_{12}+q_{22}+2c_2, \nonumber
\end{align}
where $q_{12} \ov{\mathrm{def}}{=} \tr(X_{12}^*X_{12}X_{12}^*X_{12})$, $q_{22} \ov{\mathrm{def}}{=} \tr(X_{22}^*X_{22}X_{22}^*X_{22})$, $c_2 \ov{\mathrm{def}}{=} \tr(X_{12}^*X_{12}X_{22}^*X_{22})$. The third term of \eqref{inter-34567} gives
\begin{align}
\MoveEqLeft
\label{inter-BCD}
\tr(R^*R)
\ov{\eqref{def-de-P-Q-R}}{=} \tr\big[(X_{11}^*X_{12}+X_{21}^*X_{22})^*(X_{11}^*X_{12}+X_{21}^*X_{22})\big] \\
&=\tr\big[(X_{12}^*X_{11}+X_{22}^*X_{21})(X_{11}^*X_{12}+X_{21}^*X_{22})\big] \nonumber\\
&=  \tr\big[X_{12}^*X_{11}X_{11}^*X_{12} +X_{12}^*X_{11}X_{21}^*X_{22} +X_{22}^*X_{21}X_{11}^*X_{12} +X_{22}^*X_{21}X_{21}^*X_{22}  \big]  \nonumber \\
&=r_1+r_2+2\Re\tr(X_{12}^*X_{11}X_{21}^*X_{22})
=r_1+r_2+2\Re\tau, \nonumber
\end{align}
where $r_1\ov{\mathrm{def}}{=} \tr(X_{12}^*X_{11}X_{11}^*X_{12})$, $r_2\ov{\mathrm{def}}{=} \tr(X_{22}^*X_{21}X_{21}^*X_{22})$ and $\tau \ov{\mathrm{def}}{=} \tr(X_{12}^*X_{11}X_{21}^*X_{22})$. Therefore a direct expansion gives
\begin{equation}
\label{eq-block-expansion}
\norm{X}_{S_2^4(S_m^4)}^4
\ov{\eqref{inter-34567}\eqref{inter-ABC-1}\eqref{inter-ABC-2}\eqref{inter-BCD}}{=} \tr|X|^4
=\sum_{i,j=1}^2 q_{ij}+2\sum_{i=1}^2 r_i+2\sum_{j=1}^2 c_j+4\Re\tau.
\end{equation}
Set
\begin{equation}
\label{inter-3456}
\gamma\ov{\mathrm{def}}{=} c_{11}\ovl{c_{12}}\ovl{c_{21}}c_{22},
\quad d_{ij}\ov{\mathrm{def}}{=} \mu-|c_{ij}|^4,
\quad \rho_i\ov{\mathrm{def}}{=} \mu-|c_{i1}c_{i2}|^2,
\quad \kappa_j \ov{\mathrm{def}}{=} \mu-|c_{1j}c_{2j}|^2,
\quad \eta \ov{\mathrm{def}}{=} \mu-\gamma.
\end{equation}
Testing the Schur multiplier $M_C \co S^4_2 \to S^4_2$ on matrix units gives $\lambda=\norm{M_C}_{S^4_2 \to S^4_2}^4 \geq \max_{1 \leq i,j \leq 2} |c_{ij}|^4$. Moreover, we have
\[
|c_{i1}c_{i2}|^2
\leq\max\{|c_{i1}|^4,|c_{i2}|^4\}
\leq\lambda
\]
and, similarly,
\[
|c_{1j}c_{2j}|^2
\leq\max\{|c_{1j}|^4,|c_{2j}|^4\}
\leq\lambda.
\]
Therefore $d_{ij}$, $\rho_i$ and $\kappa_j$ are strictly positive since $\mu>\lambda$. We will apply \eqref{eq-block-expansion} before and after the multiplier. If $Y \ov{\mathrm{def}}{=}(\Id_{S_m^4}\ot M_C)(X)$, then $Y_{ij}=c_{ij}X_{ij}$ for any $1 \leq i,j \leq 2$. Hence the quantities associated with $Y$ instead of $X$ in \eqref{eq-block-expansion} are
\begin{equation}
\label{inter-ergbkl}
q_{ij}(Y)=|c_{ij}|^4q_{ij},\qquad
r_i(Y)=|c_{i1}c_{i2}|^2r_i,\qquad
c_j(Y)=|c_{1j}c_{2j}|^2c_j
\end{equation}
and
\begin{equation}
\label{inter-456787}
\tau(Y)
=\tr(Y_{12}^*Y_{11}Y_{21}^*Y_{22})
=c_{11}\overline{c_{12}}\overline{c_{21}}c_{22}\tau
\ov{\eqref{inter-3456}}{=}\gamma\tau.
\end{equation}
Consequently, we have
\begin{equation}
\label{inter-788YU}
\norm{Y}_{S_2^4(S_m^4)}^4
\ov{\eqref{eq-block-expansion}\eqref{inter-ergbkl} \eqref{inter-456787}}{=} \sum_{i,j=1}^2|c_{ij}|^4q_{ij}
+2\sum_{i=1}^2|c_{i1}c_{i2}|^2r_i
+2\sum_{j=1}^2|c_{1j}c_{2j}|^2c_j
+4\Re(\gamma\tau).
\end{equation}
Subtracting this identity from $\mu$ times \eqref{eq-block-expansion} therefore gives
\begin{align}
\MoveEqLeft
\label{eq-amplified-difference}
\mu\norm{X}_{S_2^4(S_m^4)}^4-\norm{(\Id_{S^4_m}\ot M_C)(X)}_{S_2^4(S_m^4)}^4 \\
&\ov{\eqref{eq-block-expansion} \eqref{inter-788YU}}{=} \sum_{i,j=1}^2 \mu q_{ij}+2\sum_{i=1}^2 \mu r_i+2\sum_{j=1}^2 \mu c_j+4\mu\Re\tau
 \nonumber \\
&- \sum_{i,j=1}^2|c_{ij}|^4q_{ij}
-2\sum_{i=1}^2|c_{i1}c_{i2}|^2r_i
-2\sum_{j=1}^2|c_{1j}c_{2j}|^2c_j
-4\Re(\gamma\tau) \nonumber\\
&\ov{\eqref{inter-3456}}{=}\sum_{i,j=1}^2 d_{ij}q_{ij}+2\sum_{i=1}^2 \rho_ir_i+2\sum_{j=1}^2\kappa_jc_j+4\Re(\eta\tau). \nonumber
\end{align}
Given arbitrary $x_{ij} \geq 0$, choose complex scalars $z_{ij}$ with $|z_{ij}| = x_{ij}$ and with the phase of $\overline{z_{12}}z_{11}\overline{z_{21}}z_{22}$ chosen so that
\begin{equation}
\label{inter-45}
\Re(\eta\overline{z_{12}}z_{11}\overline{z_{21}}z_{22}) 
= -|\eta|x_{11}x_{12}x_{21}x_{22}.
\end{equation}
Let $z=[z_{ij}]_{i,j=1}^2 \in S^4_2$. By the definition of $\lambda$, we have
\begin{equation}
\label{inter-29-28}
\norm{M_C(z)}_{S_2^4}^4
\leq\lambda\norm{z}_{S_2^4}^4
<\mu\norm{z}_{S_2^4}^4
\end{equation}
whenever $z\neq0$. Applying \eqref{eq-block-expansion} to the scalar matrix $z$, for which
\[
q_{ij}=x_{ij}^4,\qquad
r_i=x_{i1}^2x_{i2}^2,\qquad
c_j=x_{1j}^2x_{2j}^2,\qquad
\tau=\overline{z_{12}}z_{11}\overline{z_{21}}z_{22},
\]
we obtain
\[
\begin{split}
0
&\ov{\eqref{inter-29-28}}{\leq} \mu\norm{z}_{S_2^4}^4-\norm{M_C(z)}_{S_2^4}^4\\
&=\sum_{i,j=1}^2d_{ij}x_{ij}^4
+2\sum_{i=1}^2\rho_ix_{i1}^2x_{i2}^2
+2\sum_{j=1}^2\kappa_jx_{1j}^2x_{2j}^2
+4\Re\big(\eta\overline{z_{12}}z_{11}\overline{z_{21}}z_{22}\big)\\
&\ov{\eqref{inter-45}}{=}\sum_{i,j=1}^2d_{ij}x_{ij}^4
+2\sum_{i=1}^2\rho_ix_{i1}^2x_{i2}^2
+2\sum_{j=1}^2\kappa_jx_{1j}^2x_{2j}^2
-4|\eta|x_{11}x_{12}x_{21}x_{22}.
\end{split}
\]
Rearranging this inequality gives
\[
\sum_{i,j=1}^2 d_{ij}x_{ij}^4+2\sum_{i=1}^2\rho_ix_{i1}^2x_{i2}^2+2\sum_{j=1}^2\kappa_jx_{1j}^2x_{2j}^2
\geq 4|\eta|x_{11}x_{12}x_{21}x_{22}.
\]
Lemma~\ref{lem-lifting} therefore implies
\begin{equation}
\label{inter-39}
\sum_{i,j=1}^2 d_{ij}q_{ij}+2\sum_{i=1}^2 \rho_i r_i+2\sum_{j=1}^2\kappa_jc_j
\ov{\eqref{eq-operator-polynomial}}{\geq} 4|\eta||\tau|.
\end{equation}
Inserting this into \eqref{eq-amplified-difference}, we obtain
\[
\mu\|X\|_{S_m^4(S_2^4)}^4-\|(\Id_{S_m^4}\ot M_C)(X)\|_{S_m^4(S_2^4)}^4
\ov{\eqref{inter-39} \eqref{eq-amplified-difference}}{\geq} 4|\eta||\tau|+4\Re(\eta\tau)
\geq 0.
\]
Thus $\norm{\Id_{S_m^4} \ot M_C}_{S_m^4(S_2^4) \to S_m^4(S_2^4)}^4 \leq \mu$ for any integer $m \geq 1$. Letting $\mu$ decrease to $\lambda$ proves 
$$
\|M_C\|_{\cb,S^4_2 \to S^4_2}\leq \norm{M_C}_{S^4_2 \to S^4_2}.
$$ 
The reverse inequality is immediate from the first matrix level.
\end{proof}

\section{A variational formula for two-column Schur multipliers}
\label{sec-variational}

Now, we develop the main structural tool used to estimate the ordinary norm. Rather than optimizing directly over rectangular matrices, we transform the $S^4$ multiplier norm of an arbitrary two-column symbol into an optimization problem involving only positive $2\times2$ matrices. The reduction uses the structure of the $S^4$ norm, convexity of finite positive operator-valued measures, and conic duality. The resulting variational formula is independent of the particular symbol considered in Theorem~\ref{thm-main} and will subsequently reduce its scalar norm computation to a low-dimensional problem.

Let $m \geq 2$. Consider a matrix $B=[b_{kj}]$ in $\M_{m,2}$. For any integer $1 \leq k \leq m$, we introduce the matrix $D_k \ov{\mathrm{def}}{=}  \operatorname{diag}(b_{k1},b_{k2})$ in $\M_2$. For any positive matrix $Q \in \M_2$, define
\begin{equation}
\label{eq-fQ}
f_B(Q)
\ov{\mathrm{def}}{=} \inf\bigl\{\|W\|_{S^2_2} : W=W^*, W\geq D_kQD_k^*\text{ for }1 \leq k \leq m\bigr\}.
\end{equation}
The admissible set in \eqref{eq-fQ} is nonempty, since the matrix $t\I_2$ is admissible for every sufficiently large $t$. In particular, $f_B(Q)$ is finite. 


We briefly recall the terminology from convex duality that will be used below. We refer to \cite{BoV04} , \cite{Gul10}, \cite{Lue97}, \cite{NoW06} and \cite{WSV00}. Following \cite[Definition 4.17 p.~93]{Gul10}, a subset $C$ of a vector space is called a cone if $t x \in C$ whenever $t > 0$ and $x \in C$. Let $E$ be a finite-dimensional real Hilbert space. A closed, convex cone in $E$ with a nonempty interior and containing no whole lines is called a regular convex cone. We will use the notation
$$
K^* 
\ov{\mathrm{def}}{=} \{z \in E : \la x, z \ra \geq 0 \text{ for all } x \in K\}
$$
of \cite[p.~295]{Gul10} for the (modified) dual cone, which is the reflection through the origin of the usual dual cone.

%

Now, we prove the following variational formula.

\begin{prop}
\label{prop-variational}
For any matrix $B \in \M_{m,2}$, we have
\begin{equation}
\label{eq-variational}
\norm{M_B}_{S^4_{m,2} \to S^4_{m,2}}^2
=\sup_{\substack{Q \geq 0 \\ \|Q\|_{S^2_2}=1} }f_B(Q).
\end{equation}
\end{prop}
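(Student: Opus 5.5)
The plan is to rewrite both sides of \eqref{eq-variational} as the same supremum over families of positive $2\times2$ matrices: the left side after lifting a rank-one constraint, the right side after applying conic duality to \eqref{eq-fQ}.

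\textbf{Step 1: the left side.} Write $X\in\M_{m,2}$ through its rows $\xi_k\in\mathbb{C}^{1\times2}$. Then
\[
X^*X=\sum_{k=1}^m\xi_k^*\xi_k
\quad\text{and}\quad
\|X\|_4^4=\tr\big((X^*X)^2\big)=\|X^*X\|_{S^2_2}^2 .
\]
The $k$th row of $M_B(X)$ is $\xi_kD_k$, so
\[
M_B(X)^*M_B(X)=\sum_{k=1}^m D_k^*P_kD_k,\qquad P_k=\xi_k^*\xi_k .
\]
Hence $\|M_B\|^2$ is the supremum of $\|\sum_kD_k^*P_kD_k\|_{S^2_2}$ over rank-$\leq1$ positive $P_k$ with $\|\sum_kP_k\|_{S^2_2}=1$. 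Because the $D_k$ are diagonal, replacing $D_k$ by $D_k^*$ only conjugates entries. This is harmless, and I will not track it.

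\textbf{Step 2: removing the rank-one constraint.} I claim the supremum is unchanged if the $P_k$ are allowed to be arbitrary positive matrices. Fix $Q=\sum_kP_k$ with $\|Q\|_{S^2_2}=1$ and consider the compact convex set $\{(P_k)_k:P_k\geq0,\ \sum_kP_k=Q\}$ of finite POVMs with total mass $Q$. The objective is convex in $(P_k)$, so its maximum is attained at an extreme point.

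The extreme-point characterisation I will use is this: at an extreme point, the ranges of the $P_k$ give linearly independent families $\{P_k^{1/2}TP_k^{1/2}\}$, which forces $\sum_k\operatorname{rank}(P_k)^2\leq4$. Since $Q$ is $2\times2$, either every $P_k$ has rank $\leq1$, or a single $P_k$ equals $Q$ and all the others vanish. I can first reduce to $Q$ invertible by continuity, or treat the singular case directly.

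In the second case,
\[
\|D_k^*QD_k\|_{S^2_2}\leq\|D_k\|_\infty^2=\max_j|b_{kj}|^2,
\]
and this value is already attained by a matrix unit, which has rank one. So the relaxation does not increase the supremum. I expect this extreme-point step to be the main obstacle. If the rank-counting argument becomes messy, I would try the fallback of decomposing each $P_k$ into rank-one pieces and checking that the convex objective does not increase when the pieces are recombined; I am not sure this fallback closes, so the extreme-point argument is the primary route.

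\textbf{Step 3: the right side via conic duality.} For fixed $Q\geq0$, \eqref{eq-fQ} is a conic program over the self-dual cone of positive matrices together with the second-order-type cone $\{(W,t):\|W\|_{S^2_2}\leq t\}$. Slater's condition holds because $t\I_2$ is strictly feasible. The dual program is
\[
f_B(Q)=\sup\Big\{\sum_k\tr(D_kQD_k^*Y_k):Y_k\geq0,\ \Big\|\sum_kY_k\Big\|_{S^2_2}\leq1\Big\},
\]
obtained by pairing $W\geq D_kQD_k^*$ with multipliers $Y_k\geq0$ and using $\sup_W\big(\tr(WY)-\|W\|_{S^2_2}\big)=0$ exactly when $\|Y\|_{S^2_2}\leq1$. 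The dual objective equals $\tr\big(Q\sum_kD_k^*Y_kD_k\big)$.

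\textbf{Step 4: combining.} Take the supremum over $Q\geq0$ with $\|Q\|_{S^2_2}=1$ and exchange the two suprema. Since $\sum_kD_k^*Y_kD_k\geq0$, the supremum over $Q$ equals $\|\sum_kD_k^*Y_kD_k\|_{S^2_2}$, attained at a normalised multiple of this positive matrix. Finally, $\sup_{\|Y\|\leq1}$ agrees with $\sup_{\|Y\|=1}$ by homogeneity. This reproduces the relaxed supremum from Step 2, which proves \eqref{eq-variational}.
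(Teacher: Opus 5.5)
Your proposal is correct and follows essentially the same route as the paper. Both arguments write the rows as rank-one $P_k$ and remove the rank constraint via extreme points of $\{P_k\geq0,\ \sum_kP_k=Q\}$, using the bound $\sum_k\operatorname{rank}(P_k)^2\leq4$ and a matrix unit for the exceptional extreme point, and both then combine Hilbert--Schmidt duality and conic duality with an exchange of suprema. The only differences are cosmetic: you dualize the minimization defining $f_B(Q)$ directly (Slater at $W=t\I_2$) where the paper dualizes the inner maximization over $(P_k)$ (interior point $P_k=\varepsilon\I_2$), and your conjugation caveat is unnecessary since $\tr(D_kQD_k^*Y_k)=\tr(QD_k^*Y_kD_k)$ already matches $\sum_kD_k^*P_kD_k$ exactly.
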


\begin{proof}
Let $X \in \M_{m,2}$. Write the rows of $X$ as $x_1,\ldots,x_m$ and put $P_k\ov{\mathrm{def}}{=}x_k^*x_k$ for any integer $1\leq k\leq m$. Since
\begin{equation}
\label{inter-GHJ789}
X^*X
=\sum_{k=1}^m x_k^*x_k
=\sum_{k=1}^mP_k,
\end{equation}
we have
\[
\norm{X}_{S^4_{m,2}}^4
= \tr((X^*X)^2)
\ov{\eqref{inter-GHJ789}}{=}\tr\left(\left(\sum_{k=1}^mP_k\right)^2\right)
=\norm{\sum_{k=1}^mP_k}_{S^2_2}^2.
\]
Moreover, the $k$th row of $B\circ X$ is $x_kD_k=x_k\begin{bmatrix}
    b_{k1} & 0  \\
   0  & b_{k2}  \\
\end{bmatrix}$. Consequently, we have
\begin{equation}
\label{inter-29000-21}
(B\circ X)^*(B\circ X)
=\sum_{k=1}^m(x_kD_k)^*(x_kD_k)
=\sum_{k=1}^m D_k^*x_k^*(x_kD_k)
=\sum_{k=1}^m D_k^*P_kD_k,
\end{equation}
and therefore
\[
\norm{B \circ X}_{S^4_{m,2}}^4
=\tr\big[(B \circ X)^*(B \circ X)\big]^2
\ov{\eqref{inter-29000-21}}{=} \tr\bigg[\sum_{k=1}^m D_k^*P_kD_k\bigg]^2
=\norm{\sum_{k=1}^m D_k^*P_k D_k}_{S^2_2}^2.
\]
Conversely, every positive matrix in $\M_2$ of rank at most one can be written as $x^*x$ for some $x\in\M_{1,2}$. It follows that
\begin{equation}
\label{eq-positive-decomposition-rank-one}
\norm{M_B}_{S^4_{m,2}\to S^4_{m,2}}^2
=\sup_{\substack{P_k\geq0,\ \operatorname{rank}(P_k)\leq1\\
\sum_{k=1}^mP_k\neq0}}
\frac{\norm{\sum_{k=1}^mD_k^*P_kD_k}_{S^2_2}}
{\norm{\sum_{k=1}^mP_k}_{S^2_2}}.
\end{equation}
Now, we show that the rank conditions in \eqref{eq-positive-decomposition-rank-one} can be omitted. Fix a nonzero positive matrix $H\in\M_2$ and consider the compact convex set
\[
\mathcal{P}(H)
\ov{\mathrm{def}}{=}
\left\{(P_1,\ldots,P_m):P_k\geq0\text{ and }\sum_{k=1}^mP_k=H\right\}.
\]
Suppose first that $H$ is invertible. The change of variables $
P_k=H^{\frac12}E_kH^{\frac12}$ identifies $\mathcal{P}(H)$ with the set of finite positive operator-valued measures on $\mathbb{C}^2$. More precisely, a finite positive operator-valued measure \cite[Definition 2.34 p.~101]{Wat18} with $m$ outcomes is a family $(E_1,\ldots,E_m)$ of positive matrices in $\M_2$ satisfying $
\sum_{k=1}^mE_k=\I_2$. Indeed, the preceding change of variables gives
\[
E_k=H^{-\frac12}P_kH^{-\frac12}\geq0
\quad\text{and}\quad
\sum_{k=1}^mE_k
=H^{-\frac12}\left(\sum_{k=1}^mP_k\right)H^{-\frac12}
=\I_2.
\]
For fixed $H$, define
\begin{equation}
\label{def-Phi-H}
\Phi_H(E_1,\ldots,E_m)
\ov{\mathrm{def}}{=}
\norm{\sum_{k=1}^mD_k^*H^{\frac12}E_kH^{\frac12}D_k}_{S^2_2}.
\end{equation}
This function is convex. Indeed, if $(E_k)$ and $(F_k)$ are two positive operator-valued measures and $0\leq t\leq1$, then the triangle inequality gives
\[
\begin{split}
\Phi_H(t(E_k)+(1-t)(F_k))
&\ov{\eqref{def-Phi-H}}{=} \norm{t\sum_{k=1}^mD_k^*H^{\frac12}E_kH^{\frac12}D_k
+(1-t)\sum_{k=1}^mD_k^*H^{\frac12}F_kH^{\frac12}D_k}_{S^2_2}\\
&\leq t\norm{\sum_{k=1}^mD_k^*H^{\frac12}E_kH^{\frac12}D_k}_{S^2_2}
+(1-t)\norm{\sum_{k=1}^mD_k^*H^{\frac12}F_kH^{\frac12}D_k}_{S^2_2} \\
&\ov{\eqref{def-Phi-H}}{=} t\Phi_H(E_1,\ldots,E_m)+(1-t)\Phi_H(F_1,\ldots,F_m).
\end{split}
\]
Since the set of positive operator-valued measures is compact and convex by \cite[pp.~103-104]{Wat18}, $\Phi_H$ attains its maximum at an extreme point of this set according to \cite[7.69 p.~298]{AB06}. 
Let $(E_1,\ldots,E_m)$ be such an extreme point. Put $r_k\ov{\mathrm{def}}{=}\operatorname{rank}(E_k)$.  
By \cite[Corollary 2.1 p.~564]{Par99}, we have the inequality
\begin{equation}
\label{eq-rank-extreme-POVM}
\sum_{k=1}^m r_k^2\leq4.
\end{equation}
If no $E_k$ has rank two, then every nonzero $E_k$ has rank one. Consequently, we have
\[
\operatorname{rank}(P_k)
=\operatorname{rank}\big(H^{\frac12}E_kH^{\frac12}\big)
\leq1.
\]
If some $E_{k_0}$ has rank two, then \eqref{eq-rank-extreme-POVM} implies that $E_k=0$ for every $k\neq k_0$. Since $\sum_{k=1}^mE_k=\I_2$, we necessarily have $E_{k_0}=\I_2$. In this case, $P_{k_0}=H$ and $P_k=0$ for $k\neq k_0$. The corresponding quotient satisfies
\[
\frac{\norm{D_{k_0}^*HD_{k_0}}_{S^2_2}}{\norm{H}_{S^2_2}}
\leq\norm{D_{k_0}}_{\M_2}^2.
\]
Let $\xi \in \mathbb{C}^2$ be a unit vector such that $\norm{D_{k_0}^*\xi}_2=\norm{D_{k_0}}_{\M_2}$ and set $P_{k_0}' \ov{\mathrm{def}}{=}\xi\xi^*$ and $P_k'\ov{\mathrm{def}}{=} 0$ for $k\neq k_0$. Notice that the rank-one family $(P_k')$ constructed previously need not have sum $H$. This causes no difficulty because the quotient in \eqref{eq-positive-decomposition-rank-one} is optimized over all nonzero families, and hence over all possible sums $H$. We have shown that every quotient associated with an arbitrary positive family is bounded above by a quotient associated with a rank-one family. The converse inequality is immediate because rank-one families form a subclass of all positive families. Then
\[
\frac{\norm{D_{k_0}^*P_{k_0}'D_{k_0}}_{S^2_2}}
{\norm{P_{k_0}'}_{S^2_2}}
=\norm{D_{k_0}}_{\M_2}^2.
\]
Thus the quotient associated with the exceptional extreme point $(0,\ldots,\I_2,\ldots,0)$ is no larger than a quotient obtained with rank-one matrices.

Finally, if $H$ is singular and nonzero, then $\operatorname{rank}(H)=1$. Since $0\leq P_k\leq H$ for every $k$, the range of $P_k$ is contained in the range of $H$. Hence $\operatorname{rank}(P_k)\leq1$. We conclude that the rank conditions in \eqref{eq-positive-decomposition-rank-one} can be omitted. Therefore
\begin{equation}
\label{eq-positive-decomposition}
\norm{M_B}_{S^4_{m,2}\to S^4_{m,2}}^2
=\sup_{\substack{P_k\geq0\\\sum_{k=1}^mP_k\neq0}}
\frac{\norm{\sum_{k=1}^mD_k^*P_kD_k}_{S^2_2}}
{\norm{\sum_{k=1}^mP_k}_{S^2_2}}.
\end{equation}
Now, we dualize the numerator. If $T \in \M_2$ is positive, Hilbert--Schmidt duality gives
\begin{equation}
\label{Hilbert-Schmidt-duality}
\norm{T}_{S^2_2}
=\sup_{\substack{Q\geq0\\\norm{Q}_{S^2_2}=1}}\tr(QT).
\end{equation}
Applying this identity to $T=\sum_{k=1}^mD_k^*P_kD_k$ and using the cyclicity of the trace, we obtain
\begin{equation}
\label{inter-sdfghyy}
\norm{\sum_{k=1}^mD_k^*P_kD_k}_{S^2_2}
\ov{\eqref{Hilbert-Schmidt-duality}}{=} \sup_{\substack{Q\geq0\\\norm{Q}_{S^2_2}=1}}
\sum_{k=1}^m\tr(QD_k^*P_kD_k)
=\sup_{\substack{Q\geq0\\\norm{Q}_{S^2_2}=1}}
\sum_{k=1}^m\tr(D_kQD_k^*P_k).
\end{equation}
Since two suprema may be interchanged, \eqref{eq-positive-decomposition} gives
\begin{equation}
\label{eq-double-supremum}
\norm{M_B}_{S^4_{m,2}\to S^4_{m,2}}^2
\ov{\eqref{eq-positive-decomposition}\eqref{inter-sdfghyy} }{=}\sup_{\substack{Q\geq0\\\norm{Q}_{S^2_2}=1}}
\sup_{\substack{P_k\geq0\\\sum_{k=1}^mP_k\neq0}}
\frac{\sum_{k=1}^m\tr(D_kQD_k^*P_k)}
{\norm{\sum_{k=1}^mP_k}_{S^2_2}}.
\end{equation}
Fix such a matrix $Q$ and set $A_k\ov{\mathrm{def}}{=}D_kQD_k^*$. By homogeneity, the inner supremum in \eqref{eq-double-supremum} is equal to the value of the conic optimization problem
\begin{equation}
\label{eq-primal-conic}
\sup\left\{
\sum_{k=1}^m\tr(A_kP_k):
P_k\geq0,\ R=\sum_{k=1}^mP_k, \norm{R}_{S^2_2}\leq1
\right\}.
\end{equation}
Notice that the condition $R\geq0$ is automatic. Now, we put this problem precisely into the conic-programming framework of \cite[Theorem~11.23 p.~295]{Gul10}. Let $
\mathcal H\ov{\mathrm{def}}{=}\M_2^{\mathrm{sa}}$, viewed as a real Euclidean space with inner product $
\la U,V\ra=\tr(UV)$, and let $
\mathcal H_+
\ov{\mathrm{def}}{=}
\{P\in\mathcal H:P\geq0\}$. We also introduce the Lorentz cone 
\[
\mathcal L
\ov{\mathrm{def}}{=}
\big\{
(R,t) \in \cal{H} \times \R : \norm{R}_{S^2_2}\leq t \big\}
\]
over $\cal{H}$. Consider the vector space $
E \ov{\mathrm{def}}{=} \cal{H}^m \times \cal{H} \times \R$ and  the cone $K \ov{\mathrm{def}}{=} \cal{H}_+^m\times \mathcal L$. Both $\mathcal H_+$ and $\mathcal L$ are self-dual regular convex cones, and hence so is their product $K$. In particular, we have
\[
K^*=(\mathcal H_+^*)^m \times \mathcal L^*
=\mathcal H_+^m\times\mathcal L
=K.
\] 
For any $x=(P_1,\ldots,P_m,R,t) \in E$, we define the element 
\begin{equation}
\label{def-A-x}
\mathscr A x
\ov{\mathrm{def}}{=}
\bigg(R-\sum_{k=1}^m P_k,t\bigg)
\end{equation}
of the space $\cal{H} \times \R$ and we consdider the element $
b \ov{\mathrm{def}}{=}v(0,1)$ in the space $\cal{H} \times \R$ and we put $c \ov{\mathrm{def}}{=} (-A_1,\ldots,-A_m,0,0)$. Then the negative of \eqref{eq-primal-conic} is exactly the conic program
\begin{equation}
\label{primal-conic}
\inf\left\{ \la c,x \ra : \mathscr{A} x=b,\quad x \in K \right\},
\end{equation}
which is of the form considered in \cite[Theorem~11.23, p.~295]{Gul10}. Now, we describe the adjoint of the operator $\mathscr{A} \co E \to \cal{H} \times \R$. Let $(W,\alpha) \in \cal{H} \times \R$ and $x=(P_1,\ldots,P_m,R,t) \in E$. Since
\begin{align*}
\MoveEqLeft
\la\mathscr{A} x,(W,\alpha) \ra_{\cal{H} \times \R}
\ov{\eqref{def-A-x}}{=} \bigg\la \bigg(R-\sum_{k=1}^m P_k,t\bigg),(W,\alpha) \bigg\ra
=\tr\left(W\left(R-\sum_{k=1}^mP_k\right)\right)+\alpha t \\
&= \sum_{k=1}^m\tr((-W)P_k)+\tr(WR)+\alpha t 
=\big\la (P_1,\ldots,P_m,R,t), (-W,\ldots,-W,W,\alpha) \big\ra_E \\
&=\big\la x, (-W,\ldots,-W,W,\alpha) \big\ra_E,
\end{align*}
we have
\[
\mathscr{A}^*(W,\alpha)
=(-W,\ldots,-W,W,\alpha).
\]
Consequently, we have
\[
c-\mathscr A^*(W,\alpha)
=
(W-A_1,\ldots,W-A_m,-W,-\alpha).
\]
Using the self-duality of $K$, the dual conic constraint $
c-\mathscr A^*(W,\alpha) \in K^*$ of \cite[Theorem~11.23, p.~295]{Gul10} is equivalent to
\[
W\geq A_k
\quad\text{for }1\leq k\leq m
\]
and $
\norm{W}_{S^2_2}\leq-\alpha$. Consequently, the dual program of the preceding minimization problem is
\[
\sup\left\{
\alpha:
W\geq A_k\text{ for }1\leq k\leq m,\quad
\norm{W}_{S^2_2}\leq-\alpha
\right\},
\]
whose value is
\[
-\inf\left\{
\norm{W}_{S^2_2}:
W=W^*,\quad
W\geq A_k\text{ for }1\leq k\leq m
\right\}.
\]
Finally, the primal conic program \eqref{primal-conic} has an interior feasible point. Indeed, take
\[
P_k=\varepsilon\I_2,
\qquad
R=m\varepsilon\I_2,
\qquad
t=1,
\]
where $\epsi > 0$ is sufficiently small that $\norm{m\epsi\I_2}_{S^2_2}<1$. Then $P_k > 0$ for every $k$ and $
(R,1) \in \operatorname{int}\mathcal L$. Thus $x=(P_1,\ldots,P_m,R,t)$ belongs to $\operatorname{int}K$ and $\mathscr A x=b$. Since the optimal value is finite, \cite[Theorem~11.23 p.~295]{Gul10} gives strong duality. 
By \cite[Theorem~11.23 p.~295]{Gul10}, we conclude that
\begin{align*}
\sup_{\substack{P_k\geq0\\\sum_{k=1}^mP_k\neq0}}
\frac{\sum_{k=1}^m\tr(D_kQD_k^*P_k)}
{\norm{\sum_{k=1}^mP_k}_{S^2_2}}
&= \inf\bigl\{\|W\|_{S^2_2} : W=W^*, W\geq D_kQD_k^*\text{ for }1 \leq k \leq m\bigr\} \\
&\ov{\eqref{eq-fQ}}{=}f_B(Q).         
\end{align*}
Combining this identity with \eqref{eq-double-supremum} proves \eqref{eq-variational}.
\end{proof}

For the matrix $B$ in \eqref{eq-B}, we have
\begin{equation}
\label{D1-D2-D3}
D_1=\operatorname{diag}(1,2),\qquad
D_2=\operatorname{diag}(1,-2),\qquad
D_3=\operatorname{diag}(2,2\mathrm{i}).
\end{equation}
We record some elementary properties of the function $f_B$. First, it is positively homogeneous: for every positive matrix $Q\in\M_2$ and every scalar $t\geq0$, we have
\[
f_B(tQ)=tf_B(Q).
\]
Indeed, if $W\geq D_kQD_k^*$ for anyy $k$, then $tW \geq D_k(tQ)D_k^*$ for anyy $k$. This gives $f_B(tQ)\leq tf_B(Q)$. For $t>0$, the reverse inequality follows by applying the same argument with $t^{-1}$.

Recall that the order on $\M_2^{\mathrm{sa}}$ is defined by $Q_1 \leq Q_2 \Longleftrightarrow Q_2-Q_1 \geq 0$. The function $f_B$ is increasing for this order. Indeed, if $0 \leq Q_1 \leq Q_2$, then
\[
D_kQ_1D_k^*
\leq D_kQ_2D_k^*
\]
for any $k$. Hence every matrix $W$ which is admissible in the definition of $f_B(Q_2)$ is also admissible in the definition of $f_B(Q_1)$. Therefore
\[
f_B(Q_1)\leq f_B(Q_2).
\]
The function $f_B$ is also convex. Let $Q_1,Q_2 \geq 0$ and $0\leq t\leq1$. For $\varepsilon>0$, choose selfadjoint matrices $W_1,W_2 \in \M_2$ such that
\begin{equation}
\label{inter-Wj-4}
W_j\geq D_kQ_jD_k^*
\quad\text{for any }k
\quad\text{and}\quad
\norm{W_j}_{S^2_2}\leq f_B(Q_j)+\varepsilon,
\qquad j=1,2.
\end{equation}
Then
\[
tW_1+(1-t)W_2
\ov{\eqref{inter-Wj-4}}{\geq} tD_k Q_1 D_k^*+(1-t)D_kQ_2D_k^*
\geq D_k(tQ_1+(1-t)Q_2)D_k^*
\]
for any $k$. Thus
\begin{align*}
\MoveEqLeft
f_B(tQ_1+(1-t)Q_2)
\ov{\eqref{eq-fQ}}{\leq} \norm{tW_1+(1-t)W_2}_{S^2_2}\\
&\leq t\norm{W_1}_{S^2_2}+(1-t)\norm{W_2}_{S^2_2}
\ov{\eqref{inter-Wj-4}}{\leq} tf_B(Q_1)+(1-t)f_B(Q_2)+\epsi.         
\end{align*}
Letting $\epsi$ decrease to zero proves the convexity of $f_B$.

We next describe a symmetry of $f_B$. Let $U\in\M_2$ be a diagonal unitary. Since $U$ commutes with every $D_k$, we have
\[
D_k(UQU^*)D_k^*
=U(D_kQD_k^*)U^*.
\]
If $W \geq D_kQD_k^*$ for any $k$, then
\[
UWU^*\geq D_k(UQU^*)D_k^*
\]
for any $k$. Moreover, $\norm{UWU^*}_{S^2_2}=\norm{W}_{S^2_2}$. It follows from \eqref{eq-fQ} that $
f_B(UQU^*) \leq f_B(Q)$. Applying the same argument to $U^*$ gives the reverse inequality. Hence
\begin{equation}
\label{eq-diagonal-unitary-invariance}
f_B(UQU^*)
=f_B(Q).
\end{equation}
We finally parametrize, up to this invariance, all the matrices which occur in the supremum in \eqref{eq-variational}. Let
\[
Q
\ov{\mathrm{def}}{=}\begin{bmatrix}a&\zeta\\\overline{\zeta}&d\end{bmatrix}\geq0
\quad \text{with} \quad
\norm{Q}_{S^2_2}=1.
\]
Write $\zeta=x\e^{\mathrm{i}\theta}$ with $x \geq 0$ and $\theta \in [0,2\pi]$. Conjugating $Q$ by the diagonal unitary $\operatorname{diag}(\e^{-\mathrm{i}\theta},1)$ makes its off-diagonal entry equal to $x$. In view of \eqref{eq-diagonal-unitary-invariance}, this conjugation does not change the value of $f_B(Q)$. We may therefore suppose that
\[
Q
=\begin{bmatrix}
a&x\\x&d
\end{bmatrix},
\quad \text{with} \quad x \geq 0.
\]
Set $
q_0\ov{\mathrm{def}}{=}\frac{a+d}{2}$
and $z\ov{\mathrm{def}}{=}\frac{a-d}{2}$. Then $q_0+z=a$ and $q_0-z=d$. So
\begin{equation}
\label{def-Q-x-z}
Q=Q(x,z)
=\begin{bmatrix}
q_0+z&x\\x&q_0-z
\end{bmatrix}.
\end{equation}
We have $Q^2 
\ov{\eqref{def-Q-x-z}}{=} \begin{bmatrix}
  (q_0+z)^2+x^2   &  2q_0x \\
  2q_0x   &  (q_0-z)^2+x^2  \\
\end{bmatrix}$. The condition $\norm{Q}_{S^2_2}=1$ is equivalent to
\begin{equation}
\label{inter-29029}
1
=\tr(Q^2)
=(q_0+z)^2+(q_0-z)^2+2x^2
=2(q_0^2+x^2+z^2).
\end{equation}
Since $Q \geq 0$, we have $q_0\geq0$, and consequently
\begin{equation}
\label{eq-q0-formula}
q_0
\ov{\eqref{inter-29029}}{=} \sqrt{\frac12-x^2-z^2}.
\end{equation}
The eigenvalues of $Q(x,z)$ are $
q_0-\sqrt{x^2+z^2}$
and $
q_0+\sqrt{x^2+z^2}$. Thus the matrix $Q(x,z)$ is positive if and only if $q_0\geq\sqrt{x^2+z^2}$. Using \eqref{eq-q0-formula}, this condition is equivalent to $x^2+z^2\leq\frac14$. We have therefore proved that, modulo conjugation by diagonal unitaries, the positive matrices $Q\in\M_2$ satisfying $\norm{Q}_{S^2_2}=1$ are exactly the matrices
\begin{equation}
\label{eq-Qxz}
Q(x,z)
\ov{\eqref{def-Q-x-z}}{=}
\begin{bmatrix}
q_0+z&x\\
x&q_0-z
\end{bmatrix},
\qquad
q_0
=\sqrt{\frac12-x^2-z^2},
\end{equation}
where
\begin{equation}
\label{eq-half-disk}
x\geq0,\qquad x^2+z^2\leq\frac14.
\end{equation}

\section{An analytic computation of the scalar norm}
\label{sec-analytic-scalar-norm}

Now, we specialize the variational formula of the preceding section to the symbol $B$ in \eqref{eq-B}. Its symmetries first reduce the optimization over positive $2\times2$ matrices to a two-parameter family. We then solve the associated problem analytically and reduce the remaining maximization to a one-variable algebraic problem. This yields the exact ordinary norm of $M_B$ and, in particular, a strictly stronger estimate than the rational upper bound required in Theorem~\ref{thm-main}. The argument is entirely analytic and independent of the certified computation given in the following section.

Recall that $
B=
\begin{bmatrix}
1&2\\
1&-2\\
2&2\mathrm{i}
\end{bmatrix}$, $
D_1 \ov{\eqref{D1-D2-D3}}{=}\operatorname{diag}(1,2)$, $
D_2 \ov{\eqref{D1-D2-D3}}{=} \operatorname{diag}(1,-2)$,
$D_3 \ov{\eqref{D1-D2-D3}}{=}\operatorname{diag}(2,2\mathrm{i})$. For a positive matrix $Q\in\M_2$, recall that
\[
f_B(Q)
\ov{\eqref{eq-fQ}}{=}
\inf\left\{
\norm{W}_{S^2_2}:
W=W^*,
\quad
W\geq D_kQD_k^*
\text{ for }1\leq k\leq3
\right\}.
\]
Define
\[
h(c)
\ov{\mathrm{def}}{=}69c^3-568c^2-1168c+1344.
\]
The polynomial $h$ has a unique zero $c_*$ in the interval $
\frac{39}{4}<c_*<\frac{781}{80}$. Set
\begin{equation}
\label{eq-Lambda-star}
\Lambda_*
\ov{\mathrm{def}}{=}
\frac{16c_*^2(3c_*-7)}
{23c_*^2-36c_*-144}.
\end{equation}

\begin{thm}
\label{thm-exact-scalar-norm}
We have
\begin{equation}
\label{eq-exact-scalar-norm}
\norm{M_B}_{S^4_{3,2}\to S^4_{3,2}}^4
=
\Lambda_*.
\end{equation}
Moreover, we have
\begin{equation}
\label{eq-analytic-rational-upper}
\Lambda_*
<
\frac{20033}{1000}
<
\left(\frac{1119}{250}\right)^2.
\end{equation}
Consequently, we have
\[
\norm{M_B}_{S^4_{3,2}\to S^4_{3,2}}^2
<
\frac{1119}{250}.
\]
\end{thm}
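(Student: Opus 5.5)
We use Proposition~\ref{prop-variational}, so that $\norm{M_B}^4=\big(\sup_Q f_B(Q)\big)^2$, where $Q$ ranges over the half-disk parametrization \eqref{eq-Qxz}--\eqref{eq-half-disk}. The argument has three steps: compute $f_B(Q(x,z))$ in closed form, maximize over $(x,z)$, and bound the result.

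\textbf{Step 1: the constraint matrices.} For $Q=Q(x,z)$ we get
\[
D_1QD_1^*=\begin{bmatrix}q_0+z&2x\\2x&4(q_0-z)\end{bmatrix},\qquad
D_2QD_2^*=\begin{bmatrix}q_0+z&-2x\\-2x&4(q_0-z)\end{bmatrix},
\]
and $D_3QD_3^*=\begin{bmatrix}4(q_0+z)&-4\mathrm{i}x\\4\mathrm{i}x&4(q_0-z)\end{bmatrix}$. The whole family $\{D_kQD_k^*\}$ is invariant under conjugation by $\operatorname{diag}(1,-1)$: this swaps $D_1QD_1^*$ and $D_2QD_2^*$ and fixes the constraint $W\geq D_3QD_3^*$ up to complex conjugation. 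Combining this conjugation with entrywise complex conjugation, and using that $f_B$ is convex, we may average a near-optimal $W$ over the symmetry group. Hence in \eqref{eq-fQ} we may restrict to $W=\begin{bmatrix}p&\mathrm{i}s\\-\mathrm{i}s&r\end{bmatrix}$ with $p,r,s$ real; the diagonal-unitary conjugation $\operatorname{diag}(1,-1)$ kills any real off-diagonal part.

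\textbf{Step 2: the three-parameter problem.} The problem becomes: minimize $p^2+r^2+2s^2$ subject to three $2\times2$ positivity conditions. These are, for $k=1,2$ (equivalently for $k=1$ alone), $p\geq a_1$, $r\geq d_1$ and $(p-a_1)(r-d_1)\geq 4x^2+s^2$, together with the analogous condition $(p-4a)(r-4d)\geq(s-4x)^2$ for $k=3$. Here $a=q_0+z$, $d=q_0-z$, $a_1=a$ and $d_1=4d$. At the optimum we expect both determinant constraints to be active, or, in one regime, only one of them. We solve the Lagrange/KKT system explicitly, which gives $f_B(Q(x,z))$ as an algebraic function, possibly piecewise, of $(x,z)$.

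\textbf{Step 3: the outer maximization.} We maximize $f_B(Q(x,z))$ over the half-disk \eqref{eq-half-disk}. Since $f_B$ is convex and the positive unit sphere is not convex, this maximization is not automatic. We plan to eliminate variables through the stationarity conditions, introducing a parameter $c$, plausibly a ratio such as $c=p/\ldots$ or a Lagrange multiplier ratio. This should reduce everything to a single-variable problem whose critical equation is $h(c)=0$, with value $\Lambda_*$ given by \eqref{eq-Lambda-star}. The boundary cases $x=0$ (diagonal $Q$, giving at most $\max_k\norm{D_k}^4=16$ after squaring) and $x^2+z^2=\frac14$ (rank-one $Q$) must be checked separately and shown to give smaller values.

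\textbf{Main obstacle.} The hardest part is certifying that the critical point selected by $h(c_*)=0$ is the global maximum. This requires handling the regime changes of active constraints in Step 2, ensuring that no other branch gives more. For that we would establish a matching upper bound: an explicit feasible $W$ for every $Q$, or equivalently a dual certificate. Because of the dual characterization in the proof of Proposition~\ref{prop-variational}, equality of the two values yields \eqref{eq-exact-scalar-norm}.

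\textbf{Numerical bounds.} The remaining inequalities \eqref{eq-analytic-rational-upper} are elementary. First, $h$ changes sign on $[\frac{39}{4},\frac{781}{80}]$, and $h'>0$ there (indeed $h'(c)=207c^2-1136c-1168>0$ for $c\geq 7$), so $c_*$ is unique. Second, $\Lambda(c)=16c^2(3c-7)/(23c^2-36c-144)$ is monotone on the interval, which is checked by a sign computation of $\Lambda'$. Evaluating $\Lambda$ at the appropriate endpoint gives a rational below $\frac{20033}{1000}$. Finally, $20033\cdot 62500<1119^2\cdot 1000$, i.e.\ $1252062500<1252161000$, and taking square roots gives the stated bound on $\norm{M_B}^2$.
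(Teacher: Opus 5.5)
\textbf{There is a genuine gap.} Several parts of your proposal are sound: the use of Proposition~\ref{prop-variational}, the averaging that reduces to $W$ with purely imaginary off-diagonal entry, the two determinant constraints, and the closing rational arithmetic. The gap is the content of the theorem itself: you do not prove that the supremum equals $\Lambda_*$. Step~3 is reverse-engineered from the statement (``plausibly a ratio such as $c=p/\ldots$'', ``this should reduce to $h(c)=0$''), and nothing you write locates the maximizer or computes its value.

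\textbf{Missing idea 1: the inner problem.} You leave the active set open (``or, in one regime, only one of them''). You also propose solving the KKT system in closed form, which would involve roots of a quartic and make the outer maximization unmanageable. In fact, for $x>0$ both determinant constraints are always active:
\begin{itemize}
\item If only the $D_3$-constraint were active, the Lagrange equations force $(2a+d\mu)(a\mu+2d)=x^2(\mu-2)^2$ with $\mu>2$. This contradicts $x^2\le ad$, since the difference equals $2\mu(a+d)^2>0$.
\item If only the $D_1$-constraint were active, a perturbation forces the off-diagonal parameter of $W$ to vanish, and then the two constraints become incompatible.
\end{itemize}
Write $\beta=-\tau x$ for the imaginary part of the off-diagonal entry of $W$ (your $s$, up to sign). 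Then $f_B(Q)^2=\min_{3/2<\tau\le 4}\Phi(a/d,\,x^2/(ad),\,\tau)$, and the minimizer is \emph{unique} by strict convexity in $2\tau-3$. This is what makes the outer problem tractable. You never need $f_B$ in closed form: at an interior maximum, uniqueness of the inner minimizer lets you impose stationarity of the explicit rational function $\Phi$ jointly in all three variables.

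\textbf{Missing idea 2: forcing the maximum into the interior and pinning its value.} Saying the boundary values are ``smaller'' means nothing without a lower bound on the supremum that exceeds them. The paper supplies one: an explicit scalar matrix $X_0$ with $\norm{B\circ X_0}_{S^4_{3,2}}^4/\norm{X_0}_{S^4_{3,2}}^4>20$. This is compared with the value $16$ on $x=0$ and the bound $<20$ on the rank-one boundary $x^2+z^2=\frac14$; the latter comes from an explicit choice of $\tau$ depending on $a/d$ plus a polynomial positivity check.

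The same witness is used a second time. With $L>20$ the critical value and $c=L/\tau$, one gets $c>5$. The stationarity equations then collapse to $c(c-4)^2(11c-4)h(c)=0$, with nonvanishing denominators. So you must show that $h$ has a unique zero on $(5,\infty)$, not merely on $[\frac{39}{4},\frac{781}{80}]$ as you check. That uniqueness is what proves that every interior critical value above $20$, and hence the global maximum, equals $\Lambda_*$.

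\textbf{Your fallback does not replace these steps.} A feasible $W(Q)$ with $\norm{W(Q)}_{S^2_2}\le\Lambda_*^{1/2}$ for every $Q$ is exactly the upper bound to be proved. The conic duality inside Proposition~\ref{prop-variational} concerns the inner problem at fixed $Q$, not the outer supremum.

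\textbf{Minor points.} The averaging step uses convexity of the feasible set and of the Hilbert--Schmidt norm, not convexity of $f_B$. The $D_3$ condition should read $(p-4a)(r-4d)\ge(s+4x)^2$; this is harmless after $s\mapsto -s$.
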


We start by solving analytically the two-dimensional semidefinite problem defining $f_B$. We shall also use the classical Lagrange multiplier theorem. Recall that if $F,G\co\R^n\to\R$ are continuously differentiable, $x_0$ is a local extremum of $F$ under the constraint $G(x)=0$, and $\nabla G(x_0)\neq0$, then there exists $\mu\in\R$ such that
\[
\nabla F(x_0)=\mu\nabla G(x_0).
\]
See, for example, \cite[Section~12.2, Theorem~12.1, pp.~327--329]{NoW06}, see also \cite[Section~9.3, pp.~242--247]{Lue97}. 

\begin{lemma}
\label{lem-one-parameter-SDP}
Consider a matrix $
Q
\ov{\mathrm{def}}{=}
\begin{bmatrix}
a&r\\
r&b
\end{bmatrix}$ such that $\norm{Q}_{S^2_2}=1$ with $r\geq0$. If $r=0$, then $f_B(Q)=4$. Suppose that $r>0$ and set $s\ov{\mathrm{def}}{=}\frac{a}{b}$ and  $\kappa\ov{\mathrm{def}}{=}\frac{r^2}{ab}$. Then $s > 0$, $0 < \kappa \leq 1$, and
\begin{equation}
\label{eq-f-exact-Phi}
f_B(Q)^2
=
\min_{\frac32<\tau\leq4}
\Phi(s,\kappa,\tau),
\end{equation}
where
\begin{equation}
\label{eq-Phi-definition}
\Phi(s,\kappa,\tau)
\ov{\mathrm{def}}{=}
\frac{
s^2U(\tau)^2+
\bigl(4+\kappa R(\tau)\bigr)^2+
2\kappa s\tau^2
}{
s^2+1+2\kappa s
},
\end{equation}
with
\begin{equation}
\label{eq-U-R-definition}
U(\tau)
\ov{\mathrm{def}}{=}
\frac{\tau(3\tau+8)}{4(2\tau-3)},
\qquad
R(\tau)
\ov{\mathrm{def}}{=}
\frac{4(2\tau-3)}{3}.
\end{equation}
If $\kappa>0$, the minimum in \eqref{eq-f-exact-Phi} is attained at a unique point of the interval $(\frac{3}{2},4)$.
\end{lemma}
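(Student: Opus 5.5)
The plan is to solve the semidefinite problem defining $f_B(Q)$ explicitly, exploiting the symmetries of $B$. First compute the three constraint matrices:
\[
D_1QD_1^*=\begin{bmatrix}a&2r\\2r&4b\end{bmatrix},\quad
D_2QD_2^*=\begin{bmatrix}a&-2r\\-2r&4b\end{bmatrix},\quad
D_3QD_3^*=\begin{bmatrix}4a&-4\mathrm{i}r\\4\mathrm{i}r&4b\end{bmatrix}.
\]

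\emph{Case $r=0$.} The constraints reduce to $W\geq\operatorname{diag}(4a,4b)=A_3$. The choice $W=A_3$ gives $f_B(Q)\leq\|A_3\|_{S^2_2}=4\|Q\|_{S^2_2}=4$. Conversely, $W\geq A_3\geq0$ forces $\tr W^2\geq\tr(W^{1/2}A_3W^{1/2})\geq\tr A_3^2$, so $f_B(Q)=4$.

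\emph{Case $r>0$: symmetry reduction.} Here $Q\geq0$ gives $a,b>0$ and $r^2\leq ab$, hence $s>0$ and $0<\kappa\leq1$. Consider the map $\sigma(W)=\overline{VWV^*}$ with $V=\operatorname{diag}(1,-1)$, where the bar denotes entrywise conjugation. It is an involution preserving the order and the $S^2_2$-norm, it swaps $A_1$ and $A_2$, and it fixes $A_3$. Hence $\sigma$ maps the admissible set onto itself. Averaging $W$ and $\sigma(W)$ and using convexity of the norm, we may therefore restrict to $\sigma$-invariant $W$, which have the form
\[
W=\begin{bmatrix}w_1&\mathrm{i}y\\-\mathrm{i}y&w_2\end{bmatrix},\qquad y\in\R.
\]
For such $W$, the constraint $W\geq A_2$ is equivalent to $W\geq A_1$. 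Writing $y=-r\tau$ and $u=w_2-4b$, the remaining constraints become $u>0$ together with
\[
(w_1-a)u\geq r^2(\tau^2+4),\qquad (w_1-4a)u\geq r^2(4-\tau)^2,
\]
and the objective is $w_1^2+w_2^2+2r^2\tau^2$. Degenerate cases with $u=0$ force $r=0$ and are excluded.

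\emph{Reduction to one parameter.} For fixed $(\tau,u)$ the best choice is $w_1=\max\bigl(a+r^2(\tau^2+4)/u,\;4a+r^2(4-\tau)^2/u\bigr)$. I would show that at the optimum the two bounds coincide. If the first bound were strictly larger, decreasing $|y|$ suitably (moving $\tau$ toward the value where the bounds meet) would decrease both $w_1$ and the term $y^2$. If the second were strictly larger, one argues similarly. A Lagrange multiplier argument in the variables $(w_1,w_2,y)$, with both constraints active, is the cleanest way to make this rigorous.

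Setting the two bounds equal gives $u=\tfrac{r^2}{a}\cdot\tfrac{4(2\tau-3)}{3}=\kappa bR(\tau)$. A direct computation then gives $w_1=aU(\tau)$ and $w_2=b(4+\kappa R(\tau))$. Using $\|Q\|_{S^2_2}^2=b^2(s^2+1+2\kappa s)=1$ to normalise, the objective becomes exactly $\Phi(s,\kappa,\tau)$.

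The range of $\tau$ comes from two requirements. First, $u>0$ forces $\tau>3/2$. Second, for $\tau>4$, replacing $\tau$ by $4$ keeps feasibility (the second constraint becomes slack while $w_1,w_2$ adjust) and strictly lowers $y^2$. Hence the infimum over $\tau\in(3/2,4]$ gives \eqref{eq-f-exact-Phi}; it is attained because $U(\tau)\to\infty$ as $\tau\downarrow3/2$.

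\emph{Main obstacle: uniqueness and interiority of the minimiser.} For $\kappa>0$ one must show that the minimiser in $\tau$ is unique and lies in the open interval $(3/2,4)$. I would first try to show that $\tau\mapsto s^2U^2+(4+\kappa R)^2+2\kappa s\tau^2$ is strictly convex on $(3/2,4]$. This holds because $U$ is convex there (a rational function with a positive pole at $3/2$; check $U''>0$), $U>0$, $R$ is affine and positive, and $\tau^2$ is strictly convex. For interiority, check that the derivative at $\tau=4$ is positive: one finds $U'(4)=0$ while $R'>0$ and $\tau^2$ is increasing. Combined with blow-up at $3/2$, this forces a unique interior critical point. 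The delicate point is making the equal-bounds reduction fully rigorous rather than heuristic; if the perturbation argument is awkward, I would instead verify directly via KKT that the candidate $W$ admits a dual certificate $P_1=P_2,P_3\geq0$ from Proposition~\ref{prop-variational}'s duality achieving the same value.
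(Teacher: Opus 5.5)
Much of your proposal is correct: the case $r=0$, the reduction to $\sigma$-invariant $W$, the formulas $u=\kappa bR(\tau)$, $w_1=aU(\tau)$, $w_2=b(4+\kappa R(\tau))$, the normalisation to $\Phi$, and the uniqueness argument. For uniqueness you use strict convexity (from convexity and positivity of $U$) together with a positive $\tau$-derivative at $\tau=4$ because $U'(4)=0$. This is a cleaner route than the paper's explicit computation of $N_{s,\kappa}''$.

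The gap is the claim that the two lower bounds for $w_1$ coincide at the optimum, i.e.\ that both $W\geq D_1QD_1^*$ and $W\geq D_3QD_3^*$ are active. Your perturbation correctly handles the case where the first bound is larger. The other case is not ``similar''. If only $W\geq D_3QD_3^*$ binds, moving $\tau$ up towards the crossing point lowers $w_1$ but raises $2y^2=2r^2\tau^2$. Moving $u$ towards the crossing lowers $w_2$ but raises $w_1$. So there is a genuine trade-off and no monotone improvement. Moreover, the second bound strictly dominates for every $\tau\leq 3/2$, so your derivation of the range $\tau>3/2$ from $u>0$ rests on this same unproved claim. A Lagrange argument in which both constraints are assumed active presupposes what must be shown; the real task is to exclude a vanishing multiplier for the $D_1QD_1^*$ constraint. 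The dual-certificate fallback is not carried out.

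What is missing is an argument excluding a minimiser at which $W-D_1QD_1^*$ is positive definite.

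The paper applies Lagrange multipliers to the single active constraint $(w_1-4a)(w_2-4b)=(y+4r)^2$. This forces a multiplier $\mu>2$ and determines $w_1,w_2,y$ explicitly. It then yields $(2a+b\mu)(a\mu+2b)=r^2(\mu-2)^2$, which contradicts $r^2\leq ab$ because $(2a+b\mu)(a\mu+2b)-ab(\mu-2)^2=2\mu(a+b)^2>0$.

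A shorter fix fits your own framework:
\begin{enumerate}
\item The minimiser $W$ is unique by strict convexity, hence $\sigma$-invariant.
\item Suppose $W-D_1QD_1^*$ were positive definite. Then so is $W-D_2QD_2^*=\sigma(W-D_1QD_1^*)$, so these two constraints hold automatically near $W$.
\item Hence $W$ would be a local, and by convexity global, minimiser of $\norm{\cdot}_{S^2_2}$ over $\{W'\geq D_3QD_3^*\}$.
\item Since $D_3QD_3^*\geq0$, this minimiser is $D_3QD_3^*$ itself. To see this, write $W'=D_3QD_3^*+E$ with $E\geq0$ and expand $\tr W'^2$, as in your $r=0$ argument.
\item But $D_3QD_3^*-D_1QD_1^*$ has vanishing $(2,2)$ entry and nonzero off-diagonal entry when $r>0$. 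So it is not positive, which is a contradiction.
\end{enumerate}
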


\begin{proof}
Let
\[
A_k\ov{\mathrm{def}}{=} D_kQD_k^*,
\qquad
1\leq k\leq3.
\]
Explicitly, we have
\[
A_1=
\begin{bmatrix}
a&2r\\
2r&4b
\end{bmatrix},
\qquad
A_2=
\begin{bmatrix}
a&-2r\\
-2r&4b
\end{bmatrix},
\qquad
A_3=
\begin{bmatrix}
4a&-4\mathrm{i}r\\
4\mathrm{i}r&4b
\end{bmatrix}.
\]
Put $J\ov{\mathrm{def}}{=}\operatorname{diag}(1,-1)$ and define $
\Theta(W)
\ov{\mathrm{def}}{=}J\overline{W}J$. The map $\Theta$ is an isometry of $\M_2^{\mathrm{sa}}$ for the Hilbert--Schmidt norm, and
\[
\Theta(A_1)=A_2,
\qquad
\Theta(A_2)=A_1,
\qquad
\Theta(A_3)=A_3.
\]
Hence the feasible set in the definition of $f_B(Q)$ is invariant under $\Theta$. Averaging a minimizer with its image under $\Theta$, we see that a minimizer may be chosen in the form
\[
W=
\begin{bmatrix}
u&\mathrm{i}\beta\\
-\mathrm{i}\beta&v
\end{bmatrix},
\qquad
u,v,\beta\in\R.
\]
The inequalities $W\geq A_k$ are equivalent to
\[
u\geq4a,
\qquad
v\geq4b, 
(u-a)(v-4b)\geq4r^2+\beta^2,
\]
and
\[
(u-4a)(v-4b)\geq(\beta+4r)^2.
\]
Set $p\ov{\mathrm{def}}{=}u-4a$, $\sigma\ov{\mathrm{def}}{=}v-4b$. Thus $p,\sigma\geq0$, and the problem is to minimize
\begin{equation}
\label{eq-objective-p-sigma-beta}
(p+4a)^2+(\sigma+4b)^2+2\beta^2
\end{equation}
under
\begin{equation}
\label{eq-two-determinant-constraints}
(p+3a)\sigma\geq\beta^2+4r^2,
\qquad
p\sigma\geq(\beta+4r)^2.
\end{equation}
Replacing $\beta$ by $0$ when $\beta>0$, or by $-4r$ when $\beta<-4r$, preserves feasibility and decreases the objective. A direct perturbation excludes the endpoint $\beta=-4r$ at a minimizer when $r>0$. We may therefore suppose that
\[
-4r<\beta\leq0.
\]
In particular, the second inequality in \eqref{eq-two-determinant-constraints} implies $p,\sigma>0$.

We claim that both inequalities in \eqref{eq-two-determinant-constraints} are equalities at a minimizer. At least one of them must be an equality, since otherwise $p$ could be slightly decreased. Suppose first that the second inequality is strict. Then the first one is an equality. If $\beta\neq0$, moving $\beta$ slightly towards $0$ decreases the first right-hand side and the objective, while the strict second inequality remains valid. Thus $\beta=0$. We would then have
\[
(p+3a)\sigma=4r^2
\]
and hence $
p\sigma<4r^2$, whereas the second inequality requires $
p\sigma>16r^2$. This is impossible. Suppose now that the first inequality is strict. Then
\[
p\sigma=(\beta+4r)^2.
\]
Locally, the first constraint is irrelevant, and the Lagrange multiplier theorem applied to the last equality gives a number $\mu>0$ such that
\[
2(p+4a)=\mu\sigma,
\qquad
2(\sigma+4b)=\mu p,
\]
and $
2\beta+\mu(\beta+4r)=0$. Multiplying the first two identities gives $\mu>2$. Solving them yields
\[
p=\frac{8(2a+b\mu)}{\mu^2-4},
\qquad
\sigma=\frac{8(a\mu+2b)}{\mu^2-4},
\]
whereas the third identity gives
\[
\beta+4r=\frac{8r}{\mu+2}.
\]
The equality $p\sigma=(\beta+4r)^2$ therefore implies
\[
(2a+b\mu)(a\mu+2b)
=
r^2(\mu-2)^2.
\]
Since $Q\geq0$, we have $r^2\leq ab$. However,
\[
(2a+b\mu)(a\mu+2b)-ab(\mu-2)^2
=
2\mu(a+b)^2
>
0,
\]
which is a contradiction. Thus both inequalities in \eqref{eq-two-determinant-constraints} are equalities. Subtracting them gives
\[
3a\sigma
=
\beta^2+4r^2-(\beta+4r)^2
=
-4r(2\beta+3r).
\]
Consequently, $\beta<-\frac{3r}{2}$. Write
\[
\beta=-\tau r,
\qquad
\frac32<\tau<4.
\]
The two equalities in \eqref{eq-two-determinant-constraints} give
\[
\sigma
=
\frac{4r^2(2\tau-3)}{3a}
\quad \text{and} \quad
p
=
\frac{3a(4-\tau)^2}{4(2\tau-3)}.
\]
Hence
\[
u=p+4a
=
a\frac{\tau(3\tau+8)}{4(2\tau-3)}
\ov{\eqref{eq-U-R-definition}}{=}
aU(\tau)
\quad \text{and} \quad
v=\sigma+4b
=
4b+\frac{r^2}{a}R(\tau).
\]
Conversely, these formulas define an admissible matrix $W$ for any $\frac{3}{2}<\tau\leq4$. Therefore
\[
f_B(Q)^2
=
\min_{\frac32<\tau\leq4}
\left(
a^2U(\tau)^2+
\left(4b+\frac{r^2}{a}R(\tau)\right)^2+
2\tau^2r^2
\right).
\]
Since $a=sb$, $r^2=\kappa ab=\kappa sb^2$ and
\[
1=a^2+b^2+2r^2
=b^2(s^2+1+2\kappa s),
\]
we obtain \eqref{eq-f-exact-Phi}. It remains to prove uniqueness of the minimizer. Put $y \ov{\mathrm{def}}{=}2\tau-3$. The numerator of $\Phi$ becomes
\[
N_{s,\kappa}(y)
=
s^2
\left(
\frac{(y+3)(3y+25)}{16y}
\right)^2
+
\left(4+\frac{4\kappa y}{3}\right)^2
+
\frac{\kappa s}{2}(y+3)^2.
\]
A direct differentiation gives
\[
N_{s,\kappa}''(y)
=
\frac{
(64\kappa+9s)^2y^4+
45900s^2y+
151875s^2
}{
1152y^4
}
>
0.
\]
Thus $N_{s,\kappa}$ is strictly convex on $(0,\infty)$. Moreover, we have
\[
N_{s,\kappa}'(5)
=
\frac{8\kappa(20\kappa+9s+12)}{9}.
\]
If $\kappa>0$, the unique minimizer belongs to $(0,5)$, which corresponds to $\tau\in(3/2,4)$. If $\kappa=0$, the unique minimizer is $y=5$, or equivalently $\tau=4$.
\end{proof}

Define
\begin{equation}
\label{eq-G-definition}
G(s,\kappa)
\ov{\mathrm{def}}{=} \min_{\frac32 < \tau \leq 4} \Phi(s,\kappa,\tau),
\quad
s>0,
\quad
0\leq\kappa\leq1.
\end{equation}

\begin{lemma}
\label{lem-boundary-G}
We have $
G(s,0)=16$ for any $s>0$, and
\[
\lim_{s\to0}G(s,\kappa)
=
\lim_{s\to\infty}G(s,\kappa)
=
16
\]
uniformly for $0\leq\kappa\leq1$. Moreover, we have
\begin{equation}
\label{eq-rank-one-boundary}
G(s,1)<20
\end{equation}
for every $s>0$.
\end{lemma}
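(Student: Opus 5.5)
The plan is to work directly with the explicit function $\Phi$ of \eqref{eq-Phi-definition}, using the substitution $y=2\tau-3\in(0,5]$ from the proof of Lemma~\ref{lem-one-parameter-SDP}. In these variables
\[
\Phi=\frac{N_{s,\kappa}(y)}{s^2+1+2\kappa s},
\qquad
N_{s,\kappa}(y)=s^2\Bigl(\frac{(y+3)(3y+25)}{16y}\Bigr)^2+\Bigl(4+\frac{4\kappa y}{3}\Bigr)^2+\frac{\kappa s}{2}(y+3)^2 .
\]

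\textbf{The case $\kappa=0$.} Here the numerator is $s^2U(\tau)^2+16$ and the denominator is $s^2+1$. The minimum of $U$ on $(3/2,4]$ is attained at $\tau=4$, which corresponds to $y=5$, as already found in the uniqueness part of the lemma. A direct computation gives $U(4)=4\cdot20/(4\cdot5)=4$. Hence $G(s,0)=(16s^2+16)/(s^2+1)=16$.

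\textbf{The limits $s\to0$ and $s\to\infty$.} I will squeeze $G$ between explicit bounds.
\begin{itemize}
\item Upper bound: evaluate at the admissible point $\tau=4$. This gives
\[
G(s,\kappa)\le\frac{16s^2+(4+20\kappa/3)^2+32\kappa s}{s^2+1+2\kappa s}.
\]
\item Lower bound: minimize each term of the numerator separately. This gives
\[
G(s,\kappa)\ge\frac{16s^2+16+(\text{nonnegative})}{s^2+1+2\kappa s}\ge\frac{16(s^2+1)}{s^2+1+2s}.
\]
Here I use $U\ge4$ and $4+\kappa R\ge4$.
\end{itemize}
As $s\to\infty$, both bounds tend to $16$ uniformly in $\kappa\in[0,1]$, since the $\kappa$-dependent corrections are $O(1/s)$.

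As $s\to0$, the lower bound tends to $16$, but the upper bound at $\tau=4$ tends to $(4+20\kappa/3)^2$, which is too large. So near $s=0$ I will instead take $y$ small, for instance $y=\sqrt{s}$. Then the first term of the numerator is $O(s^2/y^2)=O(s)$, the second is $(4+O(\sqrt s))^2$, and the third is $O(s)$. Hence the numerator is at most $16+O(\sqrt s)$, uniformly in $\kappa\le1$. The denominator is at least $1$, so $\limsup_{s\to0}G\le16$ uniformly.

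\textbf{The bound $G(s,1)<20$.} This is the main obstacle, since it must hold for every $s>0$ with no asymptotic slack. I will pick an explicit $y=y(s)$ and prove the resulting rational inequality in $s$. Setting $\kappa=1$, the goal is
\[
N_{s,1}(y)<20(s+1)^2 .
\]
The natural choice is $y$ proportional to $\sqrt s$, interpolating between the regimes above. I will first try $y=c\sqrt s$ with $c$ chosen numerically, or a piecewise choice: $y=5$ for large $s$ and $y\approx 3\sqrt{s}$ for small $s$. Substituting $s=t^2$ turns the inequality into the positivity of a polynomial in $t$ with rational coefficients. I would establish that positivity by grouping terms or by a sum-of-squares decomposition, checking the discriminants of the relevant quadratic factors.

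If a single choice of $y$ fails in some middle range of $s$, I will split $s$ into finitely many intervals and use a different constant $y$ on each. On each interval the inequality is quadratic in $s$ with sign-checkable coefficients.
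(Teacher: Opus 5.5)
Your treatment of $G(s,0)=16$ and of the two limits is correct. It is essentially the paper's argument, with the same test points $\tau=4$ and $2\tau-3=\sqrt{s}$. Your $\kappa$-independent lower bound $16(s^2+1)/(s+1)^2$ is weaker than the paper's $G\geq16$, which comes from $D_3QD_3^*=4VQV^*$, but it suffices for the limits.

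\textbf{The gap.} The inequality $G(s,1)<20$, which you rightly call the main obstacle, is not proved. You describe a search for an admissible $y(s)$ but never exhibit one and verify the resulting inequality. This step is not routine.

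\emph{The margin is thin.} Numerically $\sup_{s>0}G(s,1)\approx19.92$, reached near $s\approx0.4$. So the margin is below $0.1$, and $y(s)$ must be nearly optimal there.

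\emph{Your named candidate fails on the whole middle range.}
For $y=5$ (that is, $\tau=4$) one has $\Phi(s,1,4)=16+\frac{880}{9(1+s)^2}$. This is below $20$ only for $s>\frac{\sqrt{220}}{3}-1\approx3.94$.
For $y=3\sqrt{s}$, the middle term alone gives $(4+4\sqrt{s})^2/(1+s)^2>20$ for roughly $0.02<s<0.57$. Beyond that range it still fails: for instance $\Phi(1,1,3)\approx25$.

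The $\sqrt{s}$ scaling that serves the limit $s\to0$ is the wrong guide when $\kappa=1$. There the term $(4+\frac{4y}{3})^2$ penalises $y$ linearly, while the denominator is only $(1+s)^2$.

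\emph{Your fallback is not carried out.} Using a constant $y$ on finitely many $s$-intervals gives a quadratic inequality in $s$ on each, and this is viable in principle because the true supremum is below $20$. But it would need an explicit, fairly fine partition near $s\approx0.4$ with verified certificates, and none is given.

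\textbf{How the paper closes the step.} It uses the single rational choice $\tau_s=\frac{4s+3}{s+2}$, i.e.\ $y=\frac{5s}{s+2}$, which is linear in $s$ near $0$ and tends to $5$ at infinity. Then
\[
\Phi(s,1,\tau_s)=\frac{(4s+3)^2(144s^2+648s+1249)}{144(s+1)^2(s+2)^2}.
\]
The numerator of $20-\Phi(s,1,\tau_s)$ is
\[
576s^4+3456s^3+608s^2-1248s+279=\frac{1824s^2-32s+5}{3}+\frac{64}{3}(3s-1)^2(3s^2+20s+13).
\]
This is positive, since $1824s^2-32s+5$ has negative discriminant. An explicit $y(s)$ together with a checked positivity certificate of this kind is exactly what your proposal is missing.
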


\begin{proof}
First note that
\[
U(\tau)-4
\ov{\eqref{eq-U-R-definition}}{=}\frac{\tau(3\tau+8)}{4(2\tau-3)}-4
=\frac{3(\tau-4)^2}{4(2\tau-3)}
\geq 0.
\]
Thus
\[
G(s,0)
\ov{\eqref{eq-G-definition}}{=} \min_{\frac32 < \tau \leq 4} \Phi(s,0,\tau)
\ov{\eqref{eq-Phi-definition}}{=} \min_{\frac32 < \tau \leq 4} \frac{s^2U(\tau)^2+16}{s^2+1}
= 16.
\]
For any positive matrix $Q$ with $\norm{Q}_{S^2_2}=1$, we have
\[
D_3QD_3^*
=
4VQV^*,
\qquad
V=\operatorname{diag}(1,\i).
\]
Hence every admissible $W$ satisfies $W \geq 4VQV^*$. The Hilbert--Schmidt norm is increasing on the positive cone, so $
f_B(Q)
\geq 4$. Consequently, we have $
G(s,\kappa)
\geq 16$. For $s \to \infty$, taking $\tau=4$ gives
\[
G(s,\kappa)
\leq
\frac{
16s^2+
\left(4+\frac{20\kappa}{3}\right)^2+
32\kappa s
}{
s^2+1+2\kappa s
},
\]
whose right-hand side tends uniformly to $16$. For $s\to0$, take $
2\tau-3=\sqrt{s}$. Substitution in \eqref{eq-Phi-definition} shows uniformly for $0\leq\kappa\leq1$ that
\[
\Phi(s,\kappa,\tau)
=16+o(1).
\]
Together with the lower bound $G\geq16$, this proves the two limits. It remains to consider $\kappa=1$. Set
\[
\tau_s
\ov{\mathrm{def}}{=}\frac{4s+3}{s+2}.
\]
Then $\frac{3}{2}<\tau_s<4$, and direct simplification gives
\begin{equation}
\label{inter-Phi-Phi}
\Phi(s,1,\tau_s)
\ov{\eqref{eq-Phi-definition}}{=}
\frac{
(4s+3)^2(144s^2+648s+1249)
}{
144(s+1)^2(s+2)^2
}.
\end{equation}
Therefore
\[
20-\Phi(s,1,\tau_s)
\ov{\eqref{inter-Phi-Phi}}{=}
\frac{
576s^4+3456s^3+608s^2-1248s+279
}{
144(s+1)^2(s+2)^2
}.
\]
The numerator admits the decomposition
\[
\begin{split}
576s^4+3456s^3+608s^2-1248s+279
={}&
\frac{1824s^2-32s+5}{3}
+\frac{64}{3}(3s-1)^2(3s^2+20s+13).
\end{split}
\]
The discriminant of $1824s^2-32s+5$ is
\[
(-32)^2-4\cdot1824\cdot5
=
-35456<0.
\]
Thus the numerator is strictly positive for every $s\geq0$. Hence $
G(s,1)
\leq
\Phi(s,1,\tau_s)
<
20$.
\end{proof}

\begin{lemma}
\label{lem-scalar-witness-over-twenty}
We have $
\norm{M_B}_{S^4_{3,2}\to S^4_{3,2}}^4
>20$.
\end{lemma}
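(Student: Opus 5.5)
The natural route is an explicit witness. By definition,
\[
\norm{M_B}^4_{S^4_{3,2}\to S^4_{3,2}}\geq \frac{\norm{B\circ X}_{4}^4}{\norm{X}_4^4}
\]
for any nonzero $X\in\M_{3,2}$. So it suffices to exhibit one $X$, with exactly computable entries, for which this quotient exceeds $20$. I would not try to get the lower bound from the variational formula of Proposition~\ref{prop-variational} through $G$. There we would need to show that $\min_\tau\Phi(s,\kappa,\tau)>20$ for some fixed $(s,\kappa)$, a lower bound on a minimum. That is possible using the strict convexity of $N_{s,\kappa}$ from Lemma~\ref{lem-one-parameter-SDP}, but it is clumsier than a direct witness.

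To pick $X$, I would make the image $B\circ X$ of rank one while keeping $X$ "spread out", so that the phases of $B$ are absorbed. Take rows
\[
x_1=(1,t),\qquad x_2=(1,-t),\qquad x_3=\Bigl(\tfrac c2,\,-\mathrm{i}\,tc\Bigr),
\]
with real parameters $t,c>0$. Then the rows of $B\circ X$ are $(1,2t)$, $(1,2t)$ and $(c,2tc)$. These are all multiples of $(1,2t)$, so $B\circ X$ has rank one and
\[
\norm{B\circ X}_4^4=\bigl((2+c^2)(1+4t^2)\bigr)^2 .
\]
On the other side, $X^*X$ has diagonal entries $2+\tfrac{c^2}{4}$ and $t^2(2+c^2)$. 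Its off-diagonal entry is $\tfrac{\mathrm{i}}{2}tc^2$ up to sign, since the contributions of $x_1$ and $x_2$ cancel. Hence
\[
\norm{X}_4^4=\tr\big((X^*X)^2\big)=\Bigl(2+\tfrac{c^2}{4}\Bigr)^2+t^4(2+c^2)^2+\tfrac{1}{2}t^2c^4 .
\]

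The quotient is then an explicit rational function of $(t,c)$. I would maximize it numerically to locate a good region and then choose simple rational values there, for example $c^2$ and $t^2$ small rationals. For those values the inequality $\norm{B\circ X}_4^4>20\norm{X}_4^4$ reduces to comparing two rational numbers, which can be checked by hand.

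The main obstacle is whether this rank-one ansatz actually clears $20$. If it falls short, I would enlarge the family while keeping exact arithmetic. One option is to let the first two rows carry different moduli, $x_1=(1,t)$ and $x_2=(\lambda,-\lambda t)$. Another is to add a phase $\mathrm{e}^{\mathrm{i}\varphi}$ to $x_3$, which breaks the exact rank-one property but allows a better balance. A third is to work instead in the relaxed form \eqref{eq-positive-decomposition}, choosing rank-one $P_k$ together with a test matrix $Q$ in \eqref{eq-double-supremum}. In every case the final step is the same: exhibit explicit rational data and verify one strict rational inequality. This also gives the consistency check that the resulting value lies between $20$ and the upper bound $\Lambda_*<20.033$ of Theorem~\ref{thm-exact-scalar-norm}. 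That narrow window is exactly what makes finding the witness delicate.
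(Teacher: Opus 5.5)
\textbf{There is a genuine gap: you never supply a witness, and the family you write down cannot supply one.}

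Your strategy is the paper's: one explicit $X$ and one exact rational comparison. With $C=c^2$ and $T=t^2$ your quotient is $(2+C)^2(1+4T)^2/\bigl((2+C/4)^2+T^2(2+C)^2+TC^2/2\bigr)$. For fixed $C$ this is a $2\times2$ Rayleigh quotient in the vector $(1,T)$. Its maximum over $T$ is attained at $T=(4+C)/(1+C)$ and equals $\frac{16(2+C)^2(17+5C)}{(8+5C)(C^2+5C+8)}$. This is $17$ at $C=0$, tends to $16$ as $C\to\infty$, and never exceeds about $17.46$ (the maximum is near $C\approx1.75$).

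The failure is structural, not an artefact of your parametrization. Suppose $B\circ X$ has rank one. Then $Q=(B\circ X)^*(B\circ X)/\norm{(B\circ X)^*(B\circ X)}_{S^2_2}$ has rank one. The duality in the proof of Proposition~\ref{prop-variational} gives $\norm{B\circ X}_4^2\leq f_B(Q)\norm{X}_4^2$. After a diagonal unitary conjugation, Lemma~\ref{lem-one-parameter-SDP} and \eqref{eq-rank-one-boundary} give $f_B(Q)^2=G(s,1)<20$, or $f_B(Q)^2=16$ if $Q$ is diagonal. So making the image rank one to ``absorb the phases of $B$'' can never clear $20$.

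Your fallbacks do not escape this.
\begin{itemize}
\item Replacing $x_2$ by $(\lambda,-\lambda t)$ keeps $B\circ X$ of rank one.
\item Adding a phase $\mathrm{e}^{\mathrm{i}\varphi}$ to $x_3$ while keeping $x_1=(1,t)$, $x_2=(1,-t)$ leaves $\norm{X}_4^4$ unchanged. The contributions of the first two rows to the off-diagonal entry of $X^*X$ cancel, and $|\overline{x_{31}}x_{32}|=tc^2/2$ does not depend on $\varphi$. Meanwhile the off-diagonal entry $4t+2tc^2\mathrm{e}^{\mathrm{i}\varphi}$ of $(B\circ X)^*(B\circ X)$ can only shrink in modulus, so $\varphi=0$ remains optimal.
\item The third option merely restates the task.
\end{itemize}

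The missing idea is that a good witness must give up part of the phase alignment in $(B\circ X)^*(B\circ X)$ in order to make the off-diagonal entry of $X^*X$ small. It must also have a rank-two image. The paper's $X_0$, with rows $(5,8)$, $(5,8\mathrm{i})$, $(10,-8-8\mathrm{i})$, does exactly this. There $\overline{x_{11}}x_{12}=40$ and $\overline{x_{21}}x_{22}=40\mathrm{i}$ are orthogonal rather than opposite. Also $|x_{32}/x_{31}|=4\sqrt{2}/5<|x_{12}/x_{11}|=8/5$, whereas all your explicit families force $|x_{32}/x_{31}|=2|x_{12}/x_{11}|$. It gives $\norm{X_0}_4^4=94436$ and $\norm{B\circ X_0}_4^4=1891076$, so the quotient is $20+\frac{589}{23609}$.

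The available window $(20,\Lambda_*)$ has width only about $0.03$. The lemma is therefore not established until such a concrete $X$ is exhibited and checked.
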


\begin{proof}
Set $
X_0\ov{\mathrm{def}}{=}
\begin{bmatrix}
5&8\\
5&8\mathrm{i}\\
10&-8-8\mathrm{i}
\end{bmatrix}$. Then $
B\circ X_0
=
\begin{bmatrix}
5&16\\
5&-16\mathrm{i}\\
20&16-16\mathrm{i}
\end{bmatrix}$. A direct computation gives
\begin{equation}
\label{un-produit}
X_0^*X_0
=
\begin{bmatrix}
150&-40-40\mathrm{i}\\
-40+40\mathrm{i}&256
\end{bmatrix}
\end{equation}
and
\begin{equation}
\label{un-autre-produit}
(B\circ X_0)^*(B\circ X_0)
=
\begin{bmatrix}
450&400-400\mathrm{i}\\
400+400\mathrm{i}&1024
\end{bmatrix}.
\end{equation}
Consequently, we have
\begin{equation}
\label{inter-4776-B}
\norm{X_0}_{S^4_{3,2}}^4
=
\tr (X_0^*X_0)^2
\ov{\eqref{un-produit}}{=}\tr \begin{bmatrix}
25700&-16240-16240\mathrm{i}\\
-16240+16240\mathrm{i}&68736
\end{bmatrix}
=94436
\end{equation}
and
\begin{equation}
\label{inter-4776-C}
\norm{B\circ X_0}_{S^4_{3,2}}^4
=\tr
\big(
(B\circ X_0)^*(B\circ X_0)
\big)^2
\ov{\eqref{un-autre-produit}}{=} \tr \begin{bmatrix}
522500&589600-589600\mathrm{i}\\
589600+589600\mathrm{i}&1368576
\end{bmatrix}
=1891076.
\end{equation}
Therefore
\[
\norm{M_B}_{S^4_{3,2}\to S^4_{3,2}}^4
\ov{\eqref{inter-4776-B}\eqref{inter-4776-C}}{\geq}
\frac{1891076}{94436}
= \frac{472769}{23609}
= 20+\frac{589}{23609}
> 20.
\]
\end{proof}

\begin{proof}[of Theorem~\ref{thm-exact-scalar-norm}]
By Proposition~\ref{prop-variational} and Lemma~\ref{lem-one-parameter-SDP}, we have
\begin{equation}
\label{eq-norm-fourth-sup-G}
\norm{M_B}_{S^4_{3,2}\to S^4_{3,2}}^4
\ov{\eqref{eq-variational}}{=} \sup_{\substack{Q \geq 0 \\ \|Q\|_{S^2_2}=1} }f_B(Q)
=\sup_{\substack{s>0\\0\leq\kappa\leq1}}
G(s,\kappa).
\end{equation}
The strict convexity established in Lemma~\ref{lem-one-parameter-SDP} implies that $G$ is continuous. Lemma~\ref{lem-boundary-G} shows that $G$ extends continuously to the compactification obtained by adjoining $s=0$ and $s=\infty$.

By Lemma~\ref{lem-scalar-witness-over-twenty}, the supremum in \eqref{eq-norm-fourth-sup-G} is strictly greater than $20$. Lemma~\ref{lem-boundary-G} shows that every boundary value is at most $20$. Thus the maximum is attained at a point satisfying
\[
s>0,
\qquad
0<\kappa<1.
\]
Let $\tau\in(3/2,4)$ be the unique minimizer in \eqref{eq-G-definition}, and put $L \ov{\mathrm{def}}{=} G(s,\kappa)>20$. Set
\[
A \ov{\mathrm{def}}{=} U(\tau)^2,
\qquad
C \ov{\mathrm{def}}{=} R(\tau),
\qquad
T \ov{\mathrm{def}}{=} \tau^2.
\]
The stationarity conditions with respect to $s$ and $\kappa$ give
\begin{equation}
\label{eq-stationarity-s}
(A-L)s
=(L-T)\kappa
\quad \text{and} \quad
(L-T)s
=C(4+\kappa C).
\end{equation}
Since
\[
U(\tau)-\tau
\ov{\eqref{eq-U-R-definition}}{=}\frac{\tau(3\tau+8)}{4(2\tau-3)}-\tau
= \frac{5\tau(4-\tau)}{4(2\tau-3)}
> 0,
\]
we have $A>T$. Equations \eqref{eq-stationarity-s} imply $
A>L>T$. Put $
\alpha \ov{\mathrm{def}}{=}A-L$ and $\delta\ov{\mathrm{def}}{=}L-T$. Then $
\alpha s=\delta\kappa$ and $\delta s=C(4+\kappa C)$. Using the identity defining $L$, we obtain $
L
=
(4+\kappa C)^2-\delta\kappa s$. Consequently, we have
\[
L
=
s^2
\left(
\frac{\delta^2}{C^2}-\alpha
\right).
\]
On the other hand, eliminating $\kappa$ between the two stationarity equations gives
\[
s=
\frac{4C\delta}{\delta^2-\alpha C^2}.
\]
Combining the last two identities yields
\begin{equation}
\label{eq-critical-basic}
(L-16)(L-T)^2
=
L(A-L)C^2.
\end{equation}
Substituting the definitions of $A$, $C$ and $T$ in \eqref{eq-critical-basic} and clearing denominators gives
\begin{equation}
\label{eq-critical-polynomial-one}
9L^3
+
(46\tau^2-192\tau)L^2
+
(224\tau^2-48\tau^3)L
-
144\tau^4
=
0.
\end{equation}
The stationarity condition with respect to $\tau$ is
\[
A'
+
2(A-L)\frac{C'}{C}
+
2(A-L)\frac{T'}{L-T}
=
0.
\]
Substitution of $U$, $R$ and their derivatives gives
\begin{equation}
\label{eq-critical-polynomial-two}
-32L^2\tau
+
48L^2
-
23L\tau^3
+
180L\tau^2
-
112L\tau
+
12\tau^4
+
32\tau^3
=
0.
\end{equation}
Set $
c\ov{\mathrm{def}}{=} \frac{L}{\tau}$. Since $L>20$ and $\tau<4$, we have $c>5$. Dividing \eqref{eq-critical-polynomial-one} by $\tau^3$ gives
\begin{equation}
\label{eq-critical-c-one}
9c^3
+
(46c^2-48c-144)\tau
-
192c^2
+
224c
=
0.
\end{equation}
Similarly, \eqref{eq-critical-polynomial-two} gives
\begin{equation}
\label{eq-critical-c-two}
(23c-12)\tau^2
+
(32c^2-180c-32)\tau
-
48c^2
+
112c
=
0.
\end{equation}
Equation \eqref{eq-critical-c-one} yields
\begin{equation}
\label{eq-tau-from-c-first}
\tau
=
-\frac{c(9c^2-192c+224)}
{2(23c^2-24c-72)}.
\end{equation}
Substituting \eqref{eq-tau-from-c-first} into \eqref{eq-critical-c-two} gives
\[
0
=
-\frac{
15c(c-4)^2(11c-4)
h(c)
}{
4(23c^2-24c-72)^2
}.
\]
Since $c>5$, all the factors except $h(c)$ are nonzero. Therefore $h(c)=0$. Now, we verify that $h$ has a unique zero larger than $5$. We have
\[
h''(c)=414c-1136>0
\]
for $c\geq5$. Hence $h'$ is strictly increasing on $[5,\infty)$. Moreover, we have
\[
h'(5)=-1673<0,
\qquad
h'(8)=2992>0,
\]
and
\[
h(5)=-10071<0,
\qquad
h(8)=-9024<0.
\]
It follows that $h$ has a unique zero in $(8,\infty)$. Finally, we obtain
\[
h\left(\frac{39}{4}\right)
=
-\frac{5517}{64}<0
\quad \text{and} \quad
h\left(\frac{781}{80}\right)
=
\frac{3557289}{512000}>0.
\]
Thus this zero is precisely $c_*$. The identity
\begin{align*}
-\frac{c(9c^2-192c+224)}
{2(23c^2-24c-72)}
-
\frac{16c(3c-7)}
{23c^2-36c-144}
=
-\frac{
3c(c-4)h(c)
}{
2(23c^2-36c-144)(23c^2-24c-72)
}
\end{align*}
shows that, when $h(c)=0$,
\begin{equation}
\label{inter-fin-fin}
\tau
=
\frac{16c(3c-7)}
{23c^2-36c-144}.
\end{equation}
Hence
\[
L=c\tau
\ov{\eqref{inter-fin-fin}}{=}
\frac{16c^2(3c-7)}
{23c^2-36c-144}.
\]
Since $c=c_*$, this proves $
L=\Lambda_*$. As $L$ is the global maximum in \eqref{eq-norm-fourth-sup-G}, we obtain \eqref{eq-exact-scalar-norm}.
 It remains to establish the rational upper estimate. Define
\[
\Lambda(c)
\ov{\mathrm{def}}{=}
\frac{16c^2(3c-7)}
{23c^2-36c-144}.
\]
A direct differentiation gives
\[
\Lambda'(c)
=
\frac{
48c(23c^3-72c^2-348c+672)
}{
(23c^2-36c-144)^2
}.
\]
The cubic in the numerator is positive for $c\geq9$. Indeed, its value at $9$ is $8475$, and its derivative is positive on $[9,\infty)$. Thus $\Lambda$ is increasing on $
\left[\frac{39}{4},\frac{781}{80}\right]$. Consequently, we have
\[
\Lambda_*
<
\Lambda\left(\frac{781}{80}\right)
=
\frac{1087560463}{54291115}.
\]
Moreover,
\[
\frac{20033}{1000}
-
\frac{1087560463}{54291115}
=
\frac{10688759}{10858223000}
>
0,
\]
and
\[
\left(\frac{1119}{250}\right)^2
-
\frac{20033}{1000}
=
\frac{197}{125000}
>
0.
\]
This proves \eqref{eq-analytic-rational-upper}.
\end{proof}

\begin{remark} \normalfont
Numerically, we have
\[
c_*=9.761569762128026\ldots,
\qquad
\Lambda_*=20.030226756833782\ldots.
\]
Thus
\[
\norm{M_B}_{S^4_{3,2}\to S^4_{3,2}}
=
2.115541097034116\ldots.
\]
The numerical value obtained here agrees with the non-certified BFGS approximation, but the proof above is entirely analytic.
\end{remark}

\section{An independent certified proof of the three-by-three separation}
\label{sec-certificate}

The preceding section determined the exact scalar norm by a fully analytic argument. In this section, we give an independent computer-assisted proof of the slightly weaker rational upper bound appearing in Theorem~\ref{thm-main}. We retain this second proof both as an independent verification and because it has a different, convex-geometric nature: the continuous optimization problem is reduced to finitely many inequalities between $2\times2$ matrices, all of which are verified in exact rational arithmetic. Combined with the explicit lower witness for the second amplification, this provides a second proof of the strict separation in Theorem~\ref{thm-main}.

\paragraph{Strategy of the proof.}
By Proposition~\ref{prop-variational}, it is enough to prove that
\[
f_B(Q(x,z))\leq\frac{1119}{250}
\]
for every $(x,z)$ in the half-disk
\[
\mathcal{D}
\ov{\mathrm{def}}{=}
\left\{(x,z)\in\R^2:x\geq0\text{ and }x^2+z^2\leq\frac14\right\}.
\]
By the definition of $f_B$, this amounts to finding, for every $(x,z)\in\mathcal{D}$, a selfadjoint matrix $W$ such that
\[
W\geq D_kQ(x,z)D_k^*
\quad\text{for }1\leq k\leq3
\quad\text{and}\quad
\norm{W}_{S^2_2}\leq\frac{1119}{250}.
\]
It is of course impossible to check the uncountably many points of $\mathcal{D}$ separately. The idea is to divide $\mathcal{D}$ into finitely many rectangles and to construct one certificate for each rectangle.

The difficulty is that the map $(x,z)\mapsto Q(x,z)$ is not affine because its diagonal entries contain the function
\[
q_0(x,z)
\ov{\mathrm{def}}{=} \sqrt{\frac12-x^2-z^2}.
\]
On each rectangle $R$, we majorize $q_0$ by an affine function. This gives an affine matrix-valued function $\widehat Q_R$ satisfying
\[
Q(x,z)\leq\widehat Q_R(x,z)
\]
for $(x,z)\in R\cap\mathcal{D}$. Since $f_B$ is increasing, we have
\[
f_B(Q(x,z))\leq f_B(\widehat Q_R(x,z)).
\]
Moreover, $\widehat Q_R$ is affine and $f_B$ is convex. It is therefore sufficient to control $f_B(\widehat Q_R)$ at the four vertices of $R$. The original continuous problem is thus reduced to finitely many inequalities involving $2\times2$ matrices with rational entries. These inequalities can be checked exactly by a computer.

\begin{figure}[ht]
\centering
\begin{tikzpicture}[scale=8]
\fill[blue!8] (0,-0.5) arc[start angle=-90,end angle=90,radius=0.5] -- cycle;
\draw[blue,thick] (0,-0.5) arc[start angle=-90,end angle=90,radius=0.5];
\draw[thick] (0,-0.5) rectangle (0.5,0.5);
\draw[dashed] (0.25,-0.5)--(0.25,0.5);
\draw[dashed] (0,-0.25)--(0.5,-0.25);
\draw[dashed] (0,0)--(0.5,0);
\draw[dashed] (0,0.25)--(0.5,0.25);
\draw[dashed] (0.125,0)--(0.125,0.25);
\draw[dashed] (0.375,0)--(0.375,0.25);
\fill[orange!35] (0.25,0)--(0.375,0.125);
\draw[orange!80!black,thick] (0.25,0) rectangle (0.375,0.125);
\draw[->] (-0.03,0)--(0.57,0) node[right] {$x$};
\draw[->] (0,-0.57)--(0,0.57) node[above] {$z$};
\node[blue] at (0.17,-0.16) {$\mathcal{D}$};
\node[orange!80!black] at (0.315,0.17) {$R$};
\node[below left] at (0,-0.5) {$-\frac12$};
\node[above left] at (0,0.5) {$\frac12$};
\node[below] at (0.5,0) {$\frac12$};
\end{tikzpicture}
\caption{The half-disk $\mathcal{D}$ inside the initial rectangle. Each rectangle which is neither discarded nor certified is divided into four dyadic subrectangles.}
\label{fig-dyadic-cover}
\end{figure}

\begin{prop}
\label{prop-certified-upper}
For the matrix $B$ defined in \eqref{eq-B}, we have
\[
\norm{M_B}_{S^4_{3,2} \to S^4_{3,2}}^2
\leq \frac{1119}{250}.
\]
\end{prop}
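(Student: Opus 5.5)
We follow the strategy just described. By Proposition~\ref{prop-variational}, Lemma~\ref{lem-corner} and the diagonal-unitary invariance \eqref{eq-diagonal-unitary-invariance}, it suffices to show $f_B(Q(x,z))\leq \frac{1119}{250}$ for all $(x,z)\in\mathcal{D}$, with $Q(x,z)$ as in \eqref{eq-Qxz}. We start from the square $[0,\frac12]\times[-\frac12,\frac12]$ and run a dyadic subdivision. A rectangle $R=[x_0,x_1]\times[z_0,z_1]$ is discarded when $R\cap\mathcal{D}=\emptyset$, which is checked exactly by comparing $x_0^2+\min_{z\in[z_0,z_1]}z^2$ with $\frac14$. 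Otherwise we try to certify it, and if certification fails we split $R$ into four.

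To certify $R$, we first majorize $q_0$ on $R\cap\mathcal{D}$ by an affine function $\ell_R(x,z)=\alpha+\beta x+\gamma z$ with rational coefficients. Since $q_0^2=\frac12-x^2-z^2$ is concave, $q_0$ is concave on $\mathcal{D}$, so a tangent plane at a rational point $(\bar x,\bar z)$ majorizes $q_0$ everywhere. We replace $\sqrt{\frac12-\bar x^2-\bar z^2}$ by a rational upper bound $\bar q$. Then the rational affine function $\bar q+\frac{-\bar x(x-\bar x)-\bar z(z-\bar z)}{\bar q}$ still dominates $q_0$. This follows from $q_0\leq \frac{\bar q^2+q_0^2}{2\bar q}$ together with $\bar q^2\geq \frac12-\bar x^2-\bar z^2$, so it is a clean exact inequality. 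Set $\widehat Q_R(x,z)=Q(x,z)+(\ell_R-q_0)\I_2$. Then $\widehat Q_R\geq Q\geq0$, and $\widehat Q_R$ is affine in $(x,z)$. By monotonicity and convexity of $f_B$ (established after Proposition~\ref{prop-variational}), for $(x,z)\in R\cap\mathcal D$ we get
\[
f_B(Q(x,z))\leq f_B(\widehat Q_R(x,z))\leq \max_{v\text{ vertex of }R} f_B(\widehat Q_R(v)).
\]
Convexity is used on the full rectangle here, so we must also check that $\widehat Q_R(v)\geq 0$ at every vertex $v$. This holds because $\widehat Q_R(v)\geq Q(v)$ when $v\in\mathcal D$; otherwise it is checked directly via trace and determinant, and failing vertices force a split.

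For each vertex $v$ we need a rational selfadjoint $W_v$ with $W_v-D_k\widehat Q_R(v)D_k^*\geq0$ for $k=1,2,3$ and $\norm{W_v}_{S^2_2}\leq\frac{1119}{250}$. A floating-point solution of the small SDP, or the explicit formulas of Lemma~\ref{lem-one-parameter-SDP}, proposes $W_v$. We round it to rationals and add a small multiple $\epsi\I_2$ to absorb the rounding. The $2\times2$ positivity checks then reduce to exact rational verifications of a nonnegative diagonal and a nonnegative determinant. The norm check is exact as well: $\tr(W_v^2)\leq(\frac{1119}{250})^2$.

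The main obstacle is the margin. Theorem~\ref{thm-exact-scalar-norm} gives $f_B\leq\Lambda_*^{1/2}\approx4.47552$, while $\frac{1119}{250}=4.476$, so the slack is only about $5\cdot10^{-4}$. Near the maximizer, the combined losses from affine majorization (of order the square of the rectangle diameter) and from convex interpolation must stay below this slack, which forces deep subdivision there. Near the boundary circle the curvature of $q_0$ blows up, but the value there is at most about $\sqrt{20}$ by Lemma~\ref{lem-boundary-G}, so a coarse cover suffices. We expect the recursion to terminate with a manageable number of boxes; the remaining point is to record all certificates so that each box can be rechecked in exact arithmetic.
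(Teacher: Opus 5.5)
Your plan is essentially the paper's: the same dyadic cover of $\mathcal D$, an affine majorant of $q_0$ checked exactly, monotonicity and convexity of $f_B$, and exact $2\times2$ vertex certificates. Your rational tangent plane is the special case $e=\frac{\bar q^2+\bar x^2+\bar z^2}{\bar q}$, $g_1=-\frac{\bar x}{\bar q}$, $g_2=-\frac{\bar z}{\bar q}$ of the paper's criterion $2e^2\geq1+g_1^2+g_2^2$, which here is equivalent to $\bar q^2\geq\frac12-\bar x^2-\bar z^2$.

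The one place where you deviate breaks termination. At a vertex $v=(x,z)\notin\mathcal D$ you require $\widehat Q_R(v)\geq0$, i.e.\ $\ell_R(v)\geq|v|$, and you split when this fails. Take $\bar q=q_0(\bar v)$; rounding is negligible at this scale. Then your plane is $\ell_R(v)=\frac{1/2-\langle v,\bar v\rangle}{\bar q}$, and the condition reads $\langle v,\bar v\rangle+\bar q\,|v|\leq\frac12$. Since $(|\bar v|+\bar q)^2=1-(|\bar v|-\bar q)^2$, a tangent point at distance $d$ from the arc gives $|\bar v|+\bar q=1-O(d^2)$. Consequently a vertex at distance $\delta$ outside the arc violates the condition as soon as $\delta\gg\operatorname{diam}(R)^2$. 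This holds for any tangent point in or near $R$; a distant one makes $\widehat Q_R-Q$ far too large for the available margin. Every rectangle crossing the arc $x^2+z^2=\frac14$ generically has such a vertex with $\delta$ of order $\operatorname{diam}(R)$. Such a rectangle can be neither discarded nor certified, and its children that cross the arc inherit the same defect. So your recursion never terminates and produces no finite cover. The obstruction is not curvature, as you suggest: on $\partial\mathcal D$ one has $q_0=\frac12$ with bounded Hessian, and the blow-up occurs only at $x^2+z^2=\frac12$.

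There are two repairs. The first is the paper's: add a nonnegative rational shift $s_R$ to $\ell_R$, chosen to make $\widehat Q_R$ positive at the four vertices. This shift is $O(\operatorname{diam}R)$, which is affordable because $f_B\leq\sqrt{20}<\frac{1119}{250}$ on the arc by Lemma~\ref{lem-boundary-G}. The paper's run then closes at depth ten with $378$ certified and $352$ discarded boxes.

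The second repair is simpler: drop the positivity check altogether. The infimum \eqref{eq-fQ} is finite for every selfadjoint $Q$, and the proofs of monotonicity and convexity given after Proposition~\ref{prop-variational} never use $Q\geq0$. Proposition~\ref{prop-variational} itself is only applied to $Q(x,z)$ with $(x,z)\in\mathcal D$. Hence your chain $f_B(Q(x,z))\leq f_B(\widehat Q_R(x,z))\leq\max_v\norm{W_v}_{S^2_2}$ holds verbatim with $f_B$ extended to selfadjoint matrices. By monotonicity, the unshifted vertex values are even smaller than the shifted ones.

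With either repair your argument coincides with the paper's. As there, it becomes a proof only once the search has actually terminated and every certificate has been checked in exact arithmetic; your expectation that the recursion terminates does not yet supply that.
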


\begin{proof}
Set
\[
T\ov{\mathrm{def}}{=}\frac{1119}{250}
\quad\text{and}\quad
\mathcal{D}
\ov{\mathrm{def}}{=}
\left\{(x,z)\in\R^2:x\geq0\text{ and }x^2+z^2\leq\frac14\right\}.
\]
According to Proposition~\ref{prop-variational}, we have to prove that $
f_B(Q(x,z))\leq T$ for any $(x,z) \in \mathcal{D}$. We first explain how a single rectangle is certified. Let
\[
R=[a_0,a_1]\times[c_0,c_1]
\subseteq
\left[0,\frac12\right]\times
\left[-\frac12,\frac12\right]
\]
be a rectangle with dyadic rational endpoints. The verifier constructs rational numbers $e>0$, $g_1$ and $g_2$ such that the affine function
\[
\ell_R(x,z)
\ov{\mathrm{def}}{=}
e+g_1x+g_2z
\]
satisfies
\begin{equation}
\label{eq-affine-majorant-condition}
2e^2\geq1+g_1^2+g_2^2.
\end{equation}
We claim that
\begin{equation}
\label{eq-affine-majorant-q0}
\sqrt{\frac12-x^2-z^2}\leq\ell_R(x,z)
\end{equation}
whenever $x^2+z^2\leq\frac12$. Indeed, put $
q_0\ov{\mathrm{def}}{=}\sqrt{\frac12-x^2-z^2}$. Then $
q_0^2+x^2+z^2=\frac12$. The Euclidean Cauchy--Schwarz inequality gives
\begin{align*}
\MoveEqLeft
q_0-g_1x-g_2z
=\left\langle(q_0,x,z),(1,-g_1,-g_2)\right\rangle\\
&\leq\sqrt{q_0^2+x^2+z^2}\sqrt{1+g_1^2+g_2^2}
=\frac{1}{\sqrt{2}}\sqrt{1+g_1^2+g_2^2}
\ov{\eqref{eq-affine-majorant-condition}}{\leq}e.         
\end{align*}
This is exactly \eqref{eq-affine-majorant-q0}. The construction of $\ell_R$ can be understood geometrically. The function $
(x,z) \mapsto\sqrt{\frac12-x^2-z^2}$ is concave. Its tangent plane at a point $(x_0,z_0)$ is an affine majorant. If
\[
q_0^0
\ov{\mathrm{def}}{=}\sqrt{\frac12-x_0^2-z_0^2},
\]
then this tangent plane has the form $e+g_1x+g_2z$, where
\[
g_1=-\frac{x_0}{q_0^0},
\qquad
g_2=-\frac{z_0}{q_0^0}
\quad \text{and} \quad
e=\frac{1}{2q_0^0}.
\]
For these values, equality holds in \eqref{eq-affine-majorant-condition}. The program starts from such a tangent plane near the centre of $R$, replaces its coefficients by nearby rational numbers and increases $e$, if necessary, until \eqref{eq-affine-majorant-condition} is verified exactly.

Some vertices of $R$ may lie outside $\mathcal{D}$. The verifier therefore chooses an additional nonnegative rational number $s_R$ and sets
\[
\widehat q_{0,R}(x,z)
\ov{\mathrm{def}}{=}
\ell_R(x,z)+s_R
\]
and
\begin{equation}
\label{eq-Qhat}
\widehat Q_R(x,z)
\ov{\mathrm{def}}{=}
\begin{bmatrix}
\widehat q_{0,R}(x,z)+z&x\\
x&\widehat q_{0,R}(x,z)-z
\end{bmatrix}.
\end{equation}
The number $s_R$ is chosen so that $\widehat Q_R$ is positive at each of the four vertices of $R$. Since $\widehat Q_R$ is affine and the cone of positive matrices is convex, it follows that
\[
\widehat Q_R(x,z)\geq0
\]
throughout $R$. Moreover, for any $(x,z) \in R \cap \mathcal{D}$, \eqref{eq-affine-majorant-q0} gives
\[
\widehat q_{0,R}(x,z)
\geq\ell_R(x,z)
\geq q_0(x,z).
\]
Consequently, we have
\begin{equation}
\label{eq-Qhat-majorizes-Q}
\widehat Q_R(x,z)-Q(x,z)
=
\big(\widehat q_{0,R}(x,z)-q_0(x,z)\big)\I_2
\geq0.
\end{equation}
Since $f_B$ is increasing for the Loewner order, we deduce that
\begin{equation}
\label{eq-fQ-fQhat}
f_B(Q(x,z))
\leq f_B(\widehat Q_R(x,z))
\end{equation}
for every $(x,z)\in R\cap\mathcal{D}$. It remains to bound $f_B(\widehat Q_R)$ on $R$. Denote the four vertices of $R$ by
\[
v_{00}=(a_0,c_0),\quad
v_{10}=(a_1,c_0),\quad
v_{01}=(a_0,c_1),\quad
v_{11}=(a_1,c_1).
\]
For each vertex $v_{\varepsilon_1\varepsilon_2}$, the verifier finds rational numbers $u_{\varepsilon_1\varepsilon_2}$, $v_{\varepsilon_1\varepsilon_2}$ and $\beta_{\varepsilon_1\varepsilon_2}$ such that the selfadjoint matrix
\[
W_{\varepsilon_1\varepsilon_2}
\ov{\mathrm{def}}{=}
\begin{bmatrix}
u_{\varepsilon_1\varepsilon_2}
&
\mathrm{i}\beta_{\varepsilon_1\varepsilon_2}\\
-\mathrm{i}\beta_{\varepsilon_1\varepsilon_2}
&
v_{\varepsilon_1\varepsilon_2}
\end{bmatrix}
\]
satisfies
\begin{equation}
\label{eq-vertex-certificate}
W_{\varepsilon_1\varepsilon_2}
\geq
D_k\widehat Q_R(v_{\varepsilon_1\varepsilon_2})D_k^*
\quad\text{for }1\leq k\leq3
\end{equation}
and
\begin{equation}
\label{eq-vertex-W-norm}
\norm{W_{\varepsilon_1\varepsilon_2}}_{S^2_2}
\leq T.
\end{equation}
By the definition of $f_B$, these conditions imply
\begin{equation}
\label{eq-f-vertices}
f_B(\widehat Q_R(v_{\varepsilon_1\varepsilon_2}))
\leq T
\end{equation}
at each of the four vertices. Let us make the exact verification of \eqref{eq-vertex-certificate} explicit. At a fixed vertex, write
\[
\widehat Q_R
=
\begin{bmatrix}a&r\\r&b\end{bmatrix}
\quad\text{and}\quad
W=
\begin{bmatrix}u&\mathrm{i}\beta\\-\mathrm{i}\beta&v\end{bmatrix}.
\]
For the three diagonal matrices $D_k$, we have
\[
D_1\widehat Q_RD_1^*
=
\begin{bmatrix}a&2r\\2r&4b\end{bmatrix},
\quad
D_2\widehat Q_RD_2^*
=
\begin{bmatrix}a&-2r\\-2r&4b\end{bmatrix} \quad
\text{and} \quad
D_3\widehat Q_RD_3^*
=
\begin{bmatrix}4a&-4\mathrm{i}r\\4\mathrm{i}r&4b\end{bmatrix}.
\]
Recall that a selfadjoint matrix $
\begin{bmatrix}\alpha&\zeta\\\overline{\zeta}&\delta\end{bmatrix}$ is positive if and only if
\[
\alpha\geq0,\qquad
\delta\geq0
\quad \text{and}\quad
\alpha\delta\geq|\zeta|^2.
\]
It follows that the three inequalities in \eqref{eq-vertex-certificate} are equivalent to the finite list of inequalities
\[
u-a\geq0,\qquad
v-4b\geq0,\qquad
(u-a)(v-4b)\geq4r^2+\beta^2,
\]
and
\[
u-4a\geq0,\qquad
v-4b\geq0,\qquad
(u-4a)(v-4b)\geq(\beta+4r)^2.
\]
Moreover, \eqref{eq-vertex-W-norm} is equivalent to
\[
u^2+v^2+2\beta^2\leq T^2.
\]
All the numbers in these inequalities are rational. After multiplying by a common positive denominator, every assertion becomes an inequality between integers. Thus these verifications are exact and do not involve numerical rounding.

Now, we pass from the four vertices to the entire rectangle. Let $(x,z) \in R$ and set
\[
\theta\ov{\mathrm{def}}{=}\frac{x-a_0}{a_1-a_0},
\quad \text{and} \quad
\varphi\ov{\mathrm{def}}{=}\frac{z-c_0}{c_1-c_0}.
\]
Then $0\leq\theta,\varphi\leq1$ and, since $\widehat Q_R$ is affine,
\[
\begin{split}
\widehat Q_R(x,z)
={}&(1-\theta)(1-\varphi)\widehat Q_R(v_{00})
+\theta(1-\varphi)\widehat Q_R(v_{10})
+(1-\theta)\varphi\widehat Q_R(v_{01})
+\theta\varphi\widehat Q_R(v_{11}).
\end{split}
\]
The four coefficients are nonnegative and their sum is one. By the convexity of $f_B$ and \eqref{eq-f-vertices}, we obtain
\[
\begin{split}
f_B(\widehat Q_R(x,z))
\leq{}&(1-\theta)(1-\varphi)f_B(\widehat Q_R(v_{00}))
+\theta(1-\varphi)f_B(\widehat Q_R(v_{10}))\\
&+(1-\theta)\varphi f_B(\widehat Q_R(v_{01}))
+\theta\varphi f_B(\widehat Q_R(v_{11}))\\
\leq{}&T.
\end{split}
\]
Combining this inequality with \eqref{eq-fQ-fQhat} gives $
f_B(Q(x,z))\leq T$ for every $(x,z)\in R\cap\mathcal{D}$. This is what it means for the rectangle $R$ to be certified.

We finally explain how the finite family of certified rectangles is obtained. The algorithm starts with
\[
R_0=
\left[0,\frac12\right]
\times
\left[-\frac12,\frac12\right],
\]
which contains $\mathcal{D}$. Suppose that $
R=[a_0,a_1]\times[c_0,c_1]$ is one of the rectangles under consideration. Since $a_0\geq0$, the minimum of $x^2+z^2$ on $R$ is $
a_0^2+\delta_R^2$, where
\[
\delta_R
\ov{\mathrm{def}}{=}
\begin{cases}
0,&\text{if }c_0\leq0\leq c_1,\\
\min\{|c_0|,|c_1|\},&\text{otherwise}.
\end{cases}
\]
If $
a_0^2+\delta_R^2>\frac14$ then $R\cap\mathcal{D}=\varnothing$, and the rectangle is discarded. This test involves only rational numbers.

If $R$ intersects $\mathcal{D}$, the verifier attempts to construct the affine majorant $\widehat Q_R$ and the four vertex certificates described above. If it succeeds, $R$ is certified. If it fails, $R$ is divided into the four rectangles obtained by bisecting both coordinate intervals. Since the initial endpoints are dyadic rationals, all endpoints produced by this procedure remain dyadic rationals.

The algorithm terminates at depth ten. Every final rectangle is either disjoint from $\mathcal{D}$ or certified. More precisely, it produces $378$ certified rectangles, discards $352$ rectangles disjoint from $\mathcal{D}$ and leaves no unresolved rectangle. Therefore the certified rectangles cover the entire half-disk $\mathcal{D}$.

Floating-point computations are used only to propose the coefficients of $\ell_R$ and the entries of the matrices $W_{\varepsilon_1\varepsilon_2}$. A proposed rectangle is accepted only after all the conditions above have been checked in exact rational arithmetic. A floating-point error may therefore cause the program to reject a valid proposal and subdivide the rectangle unnecessarily, but it cannot cause an invalid rectangle to be certified.

We have proved that
\[
f_B(Q(x,z))
\leq T
=\frac{1119}{250}
\]
for any $(x,z)\in\mathcal{D}$. Proposition~\ref{prop-variational} now gives
\[
\norm{M_B}_{S^4_{3,2}\to S^4_{3,2}}^2
=\sup_{(x,z)\in\mathcal{D}}f_B(Q(x,z))
\leq\frac{1119}{250}.
\]
\end{proof}

We now give the exact lower witness. Set
\begin{equation}
\label{eq-Bsharp}
F
\ov{\mathrm{def}}{=}
\begin{bmatrix}
1&1&2\\
1&1&-2\\
2&2&2\i
\end{bmatrix}
\quad \text{and} \quad
Z
\ov{\mathrm{def}}{=}
\begin{bmatrix}
-3-\mathrm{i}&1+9\mathrm{i}&10-10\mathrm{i}\\
-5-\mathrm{i}&-8&-10+10\mathrm{i}\\-3\mathrm{i}&-18\mathrm{i}&20
\end{bmatrix}.
\end{equation}
After applying the tensor flip, the amplification symbol is $A \ot J_2$. The symbol $F$ is obtained from it by selecting rows $1,3,5$ and columns $1,2,3$. Now, we show that a direct computation gives
\begin{equation}
\label{eq-witness-values}
\norm{Z}_{S^4_3}^4
=947485,
\qquad
\|F \circ Z\|_{S^4_3}^4
=18983532.
\end{equation}
Let $U,V\co\mathbb C^3\to\mathbb C^6$ be the coordinate isometries defined by
\[
Ue_1=e_1,\qquad Ue_2=e_3,\qquad Ue_3=e_5,
\]
and
\[
Ve_1=e_1,\qquad Ve_2=e_2,\qquad Ve_3=e_3.
\]
After applying the tensor flip, the amplification symbol is $A\ot J_2$, and a direct inspection gives
\[
U^*(A\ot J_2)V=F.
\]
Set $X=UZV^*$. The nonzero singular values of $X$ are those of $Z$, and
\[
(A\ot J_2)\circ X=U(F\circ Z)V^*.
\]
Consequently, wr have
\[
\norm{\Id_{S^4_2}\ot M_A}_{S^4_6\to S^4_6}
\geq
\frac{\norm{F\circ Z}_{S^4_3}}{\norm{Z}_{S^4_3}}.
\]
The relevant Gram matrices are
\[
Z^*Z
=\begin{bmatrix}
45&82-34\mathrm{i}&20+40\mathrm{i}\\
82+34\mathrm{i}&470&180\mathrm{i}\\
20-40\mathrm{i}&-180\mathrm{i}&800
\end{bmatrix}
\]
and
\[
(F\circ Z)^*(F\circ Z)=
\begin{bmatrix}
72&244-34\mathrm{i}&-360+200\mathrm{i}\\
244+34\mathrm{i}&1442&-1760-40\mathrm{i}\\
-360-200\mathrm{i}&-1760+40\mathrm{i}&3200
\end{bmatrix}.
\]
Therefore
\[
\norm{Z}_{S^4_3}^4
=\tr((Z^*Z)^2)
=947485
\]
and
\[
\norm{F\circ Z}_{S^4_3}^4
=\tr\left(\left((F\circ Z)^*(F\circ Z)\right)^2\right)
=18983532.
\]
\begin{proof}[of Theorem~\ref{thm-main}]
Lemma~\ref{lem-corner} and Proposition~\ref{prop-certified-upper} give the scalar upper bound. The compression described above and \eqref{eq-witness-values} give
\[
\bnorm{\Id_{S^4_2} \ot M_A}_{S^4_6 \to S^4_6}^4
\geq \frac{18983532}{947485}.
\]
Finally, we obtain
\[
\frac{18983532}{947485}-\left(\frac{1119}{250}\right)^2
=\frac{13396983}{11843562500}>0.
\]
This proves the strict separation.
\end{proof}

\begin{remark}
\normalfont
A BFGS computation using the program of Caspers and Wildschut gives the non-certified approximations
\[
\norm{M_A}_{S^4_3 \to S^4_3}
\approx 2.1155410970
\quad \text{and} \quad
\bnorm{\Id_{S^4_2} \ot M_A}_{S^4_6 \to S^4_6 } 
\approx 2.1158270907.
\]
These values are not used in the proof.
\end{remark}

\section{Reproducibility of the certified estimate}
\label{Reproducibility}

The supplementary file \texttt{schur\_p4\_3x3\_certificate.py} performs both the adaptive search and the exact verification described in Proposition~\ref{prop-certified-upper}. It requires Python and NumPy. We explain the respective roles of floating-point computations and exact rational arithmetic.

\paragraph{Exact verification with rational numbers.}
The Python class \texttt{fractions.Fraction} represents a rational number as a quotient of two arbitrary-precision integers. For example, the command
\[
\texttt{Fraction(1119,250)}
\]
represents the exact rational number $1119/250$, not a binary floating-point approximation of it. Addition, multiplication, division and comparison of objects of the class \texttt{Fraction} are performed exactly by integer arithmetic.

After a rational candidate has been proposed, the verifier recomputes all the conditions used in Proposition~\ref{prop-certified-upper} with objects of the class \texttt{Fraction}. In particular, it checks exactly
\[
2e^2\geq1+g_1^2+g_2^2,
\]
the positivity of the affine matrices at the vertices, the inequalities
\[
W\geq D_k\widehat QD_k^*
\quad\text{for }1\leq k\leq3,
\]
and the norm estimate
\[
\norm{W}_{S^2_2}\leq\frac{1119}{250}.
\]
Since all entries are rational, the positivity of a selfadjoint matrix $
\begin{bmatrix}a&\zeta\\\overline{\zeta}&d\end{bmatrix}$ is checked through the exact conditions
\[
a\geq0,\qquad d\geq0,\qquad ad\geq|\zeta|^2.
\]
After clearing denominators, these are inequalities between integers.

The distinction between the search and the verification is essential. Floating-point computations are used only to find plausible rational certificates. A rectangle is accepted only if the resulting rational certificate passes every exact test. Consequently, a floating-point error may produce a poor candidate, cause an unnecessary subdivision or prevent the program from finding a certificate. However, it cannot cause an invalid rectangle to be accepted.

\paragraph{Output of the verifier.}
For reference, a successful run reports
\[
\begin{gathered}
378\text{ certified boxes},\qquad
352\text{ discarded boxes},\qquad
621\text{ tested boxes},\\
\text{scalar squared-norm upper bound }1119/250,\\
\text{amplified fourth-power witness }18983532/947485.
\end{gathered}
\]
The first line means that $378$ rectangles were equipped with exact certificates, whereas $352$ rectangles were proved to be disjoint from the half-disk. The number $621$ counts all the rectangles intersecting the half-disk on which the verifier attempted to construct a certificate, including rectangles which were subsequently subdivided. The last two lines reproduce respectively the exact scalar upper bound from Proposition~\ref{prop-certified-upper} and the exact lower witness used for the second amplification.

\paragraph{Identification of the supplementary file.}
A SHA-256 digest is a fingerprint of a computer file. The SHA-256 algorithm reads the complete sequence of bytes of the file and produces a string of $256$ bits, conventionally written as $64$ hexadecimal characters. Changing even one byte of the file almost certainly changes the resulting string.

The purpose of the digest is to identify unambiguously the precise version of the supplementary program used for the computations. A reader who downloads the file can compute its SHA-256 digest and compare it with the value displayed below. If the two values agree, then, with overwhelming probability, the reader has exactly the same file.

The SHA-256 digest of the version used for this paper is
\[
\texttt{a3acb7f4e7fae1e70d9c7936b03cff1492d33a87de4e465ef1498b6a932e5ece}.
\]
For example, on a system providing the command \texttt{sha256sum}, this digest can be checked by running
\begin{verbatim}
sha256sum schur_p4_3x3_certificate.py
\end{verbatim}
The digest is only an identifier of the file. It is not part of the mathematical proof: the proof rests on the exact rational checks performed by the program.

\paragraph{AI statement.} 
The author acknowledges the use of AI tools for language polishing, LaTeX editing, exploratory mathematical discussions and feedback during the development and preparation of this manuscript.

\paragraph{Competing interests} The author declares that he has no competing interests.

\paragraph{Data availability} No data sets were generated during this study.


{\footnotesize

\vspace{0.2cm}

\noindent C\'edric Arhancet\\ 
\noindent 6 rue Didier Daurat, 81000 Albi, France\\
URL: \href{http://sites.google.com/site/cedricarhancet}{https://sites.google.com/site/cedricarhancet}\\
cedric.arhancet@protonmail.com\\
ORCID: 0000-0002-5179-6972 

}


\small
\begin{thebibliography}{79}

%

\bibitem[AlP20]{AlP20}
A. B. Aleksandrov and V. V. Peller.
\newblock Schur multipliers of Schatten--von Neumann classes $S_p$.
\newblock J. Funct. Anal. 279 (2020), no. 8, 108683, 25 pp.













\bibitem[AB06]{AB06}%
C. D. Aliprantis and K. C. Border.
\newblock Infinite Dimensional Analysis: A Hitchhiker's Guide, third edition.
\newblock Springer, Berlin, 2006.









%





\bibitem[Arh12]{Arh12}%
C. Arhancet.
\newblock Unconditionality, Fourier multipliers and Schur multipliers.
\newblock Colloq. Math. 127 (2012), no. 1, 17--37.


 



%
%
%
%





























%



%












\bibitem[BoV04]{BoV04}%
S. Boyd and L. Vandenberghe.
\newblock Convex Optimization.
\newblock Cambridge University Press, Cambridge, 2004.



















\bibitem[CaW19]{CaspersWildschut2019}
M. Caspers and G. Wildschut.
\newblock On the complete bounds of $L^p$-Schur multipliers.
\newblock Arch. Math. (Basel) 113 (2019), no. 2, 189--200.



















 












































 








 






























%
%
%
%
 %
%
%
%




































\bibitem[Gul10]{Gul10}%
O. G\"uler.
\newblock Foundations of Optimization.
\newblock Graduate Texts in Mathematics, vol. 258. Springer, New York, 2010.


\bibitem[HST26]{HuangSukochevTomskova2026}
J. Huang, F. Sukochev and A. Tomskova.
\newblock A Schur multiplier with unequal operator and completely bounded norms on $S_4$.
\newblock Preprint, arXiv:2608.20933, 2026.





%













%
%
%




 










%
%









\bibitem[LaS11]{LafforgueDeLaSalle2011}
V. Lafforgue and M. de la Salle.
\newblock Noncommutative $L^p$-spaces without the completely bounded approximation property.
\newblock Duke Math. J. 160 (2011), no. 1, 71--116.










%
%
%
%
%
%

 



















\bibitem[Lue97]{Lue97}%
D. G. Luenberger.
\newblock Optimization by Vector Space Methods.
\newblock John Wiley \& Sons, New York, 1997.

\bibitem[NoW06]{NoW06}%
J. Nocedal and S. J. Wright.
\newblock Numerical Optimization, second edition.
\newblock Springer Series in Operations Research and Financial Engineering. Springer, New York, 2006.



















\bibitem[Par99]{Par99}%
K. R. Parthasarathy.
\newblock Extremal decision rules in quantum hypothesis testing.
\newblock Infin. Dimens. Anal. Quantum Probab. Relat. Top. 2 (1999), no. 4, 557--568.












%
\bibitem[Pis98]{Pis98}
G. Pisier.
\newblock Non-commutative vector valued $L_p$-spaces and completely $p$-summing maps.
\newblock Ast\'erisque, 247, 1998.

%
%

\bibitem[PiX03]{PiX03}
G. Pisier and Q. Xu.
\newblock Non-commutative $L^p$-spaces.
\newblock 1459--1517 in Handbook of the Geometry of Banach Spaces, Vol. II, edited by W.B. Johnson and J. Lindenstrauss, Elsevier (2003).





%












%
%
%






%














%
%

%












\bibitem[Wat18]{Wat18}
J. Watrous.
\newblock The Theory of Quantum Information. 
\newblock Cambridge university press, 2018.


\bibitem[WSV00]{WSV00}%
H. Wolkowicz, R. Saigal, and L. Vandenberghe, eds.
\newblock Handbook of Semidefinite Programming: Theory, Algorithms, and Applications.
\newblock International Series in Operations Research \& Management Science, vol. 27. Kluwer Academic Publishers, Dordrecht, 2000.




%
%
%





 

 

 

 


 
%
%










\end{thebibliography}
\end{document}